\documentclass[11pt, english,english]{article}
\usepackage{amsmath,mathrsfs} 
\usepackage{amssymb} 
\usepackage{amsfonts}
\usepackage{bbm}
\usepackage{mathtools}
\usepackage{amsthm}
\newtheorem{definition}{Definition}

\newtheorem{theorem}{Theorem}
\usepackage{hyperref}

\usepackage{imakeidx}

\usepackage[english]{babel}

\newcommand{\inR}{\in \mathbb{R}}

\newcommand{\C}{ \mathbb{C}}
\newcommand{\R}{ \mathbb{R}}
\newcommand{\Z}{ \mathbb{Z}}

\newcommand{\N}{ \mathbb{N}}

\newcommand{\eqdef}{\stackrel{\vartriangle}{=}}

\newcommand{\Lop}{{\rm L}}
\newcommand{\Dop}{{\rm D}}

\newcommand{\dint}{{\rm d}}

\newcommand{\bw}{{\boldsymbol \omega}}

\def\V#1{{\boldsymbol{#1}}}         
\def\Spc#1{{\mathcal{#1}}}  
\def\M#1{{\bf{#1}}}  
\def\Op#1{{\mathrm{#1}}}  
\def\ee{\mathrm{e}} 
\def\jj{\mathrm{i}}

\def\Prob{\mathscr{P}}
\def\Form{\widehat{\Prob}} 
\def\Exp{\mathop{\mathbb{E}}\nolimits}

\def\Identity{\mathrm{Id}} %

\usepackage{mathrsfs,bm,enumerate}
\usepackage{mathtools}

\newcommand{\embedC}{\xhookrightarrow{}}
\newcommand{\embedD}{\xhookrightarrow{\mbox{\tiny \rm d.}}}

\renewcommand{\[}{\begin{equation}}
\renewcommand{\]}[1]{\label{eq:#1}\end{equation}}

\def\Prob{\mathscr{P}}
\def\Form{\widehat{\Prob}} 
\def\Exp{\mathop{\mathbb{E}}\nolimits}

\def\wRand{\xi}
\def\bwRand{{\boldsymbol \xi}}
\def\Rand#1{{\uppercase{#1}}}         
\def\VRand#1{{\uppercase{\boldsymbol #1}}}         
\def\gRand#1{{\uppercase{ #1}}}         

\newcommand{\Tr}{\mathsf{T}}
\newcommand{\HTop}{\mathsf{H}}

\usepackage{longtable}
\usepackage{wrapfig}

\usepackage{lipsum}

\begin{document}

\title{Generalized Splines and Gaussian Processes}
\author{
Michael Unser
\thanks{Biomedical Imaging Group, \'Ecole polytechnique f\'ed\'erale de Lausanne (EPFL),
Station 17, CH-1015, Lausanne, Switzerland ({\tt michael.unser@epfl.ch}). }
 }

\maketitle

\begin{abstract}
For finite-dimensional linear inverse problems where the variables are Gaussian, it is well-known that the minimum-mean-square error estimator takes the form of a regularized least-squares data fit.  In this chapter, we show that this equivalence extends to a much broader infinite-dimensional setting where generalized splines take the role of linear regressors and generalized Gaussian processes on a nuclear space $\Spc S$ are the counterpart of Gaussian random vectors. The scope of this extension is of the same nature as the switch from the classic notion of function to that of a distribution, also known as a ``generalized function.''
Our formalism involves a whitening/regularization operator $\Op L: \Spc S\to \Spc S'$ whose continuous extension induces a native Hilbert space $\Spc H\subset \Spc S'$ that plays a central role in our characterization. The presentation is self-contained for the most part and remarkably general and powerful. It allows for the recovery of all known instances of such equivalences; in particular, the methods
involving innovations and reproducing-kernel Hilbert spaces developed by Kailath and his students, and the mathematical correspondence between fractional splines and Mandelbrot's fractional Brownian motion (fractals), with the former being the optimal estimators of the latter. It also covers general Bayesian methods for the resolution of infinite-dimensional inverse problems.
\end{abstract}

\newpage
\tableofcontents

\newpage
\section{Introduction}
The algorithmic correspondence between optimal estimation under the multivariate Gaussian hypothesis and regularized least-squares regression (RLS) is a central theme in estimation theory \cite{Kailath2000linear}. 
Specifically, RLS can implement the maximum-a-posteriori (MAP) estimator by selecting a regularization functional that matches the prior Gaussian log-likelihood of the signal to be estimated, and---since the posterior is also Gaussian---the MAP and minimum-mean-square-error (MMSE) estimators coincide. Conversely, any quadratic regularizer admits a ``reversed-engineered'' interpretation as a Gaussian log-likelihood.
This type of equivalence extends to infinite dimensions under suitable conditions (e.g., continuity of the underlying functions, stationarity) \cite{Wahba1990,Berlinet2004}, with the caveat
that the prior log-likelihood (and, hence, the MAP estimator) of an infinite-dimensional random variable is no longer well-defined.

To formulate an equivalence in the continuum, one needs a function-analytic counterpart of regularized least-squares---namely, a method to fit (finite) data with a function---which is precisely what splines do \cite{Schumaker1981}. The relevant splines here are the variational ones defined as minimizers of a spline energy subject to interpolation constraints \cite{Prenter:1975}. The relaxation of the constraints  yields a smoothing spline, a concept that can be traced back to Schoenberg \cite{Schoenberg1964}, the founding father of splines. In 1966, Carl de Boor 
introduced an elegant and remarkably powerful formalization of variational splines \cite{deBoor1966} that involves reproducing-kernel Hilbert spaces (RKHS) \cite{Aronszajn1950}.

The infinite-dimensional counterparts of Gaussian random vectors are the Gaussian stochastic processes (GSP)---discrete or continuous---which play a central role in time-series analysis, statistical signal processing, control theory, and machine learning. A significant part of the theory of GSP revolves around the use of RKHS, which was initiated by Parzen \cite{Parzen1961}. 
Another key notion is the innovations representation (IR) \cite{Kailath1970}, which allows us to interpret a GSP as a filtered white noise.
By exploiting the interplay between RKHS and IR \cite{Kailath1972}, Kailath and collaborators pioneered the use of IR to develop efficient estimation and detection methods, including Kalman-type filters, thereby laying the foundations of modern statistical signal processing and control \cite{Kailath1974}.

The link between splines and optimal (linear) estimation was uncovered around the same time by Kimeldorf and Wahba, first under the hypothesis of stationary \cite{Kimeldorf1970} and then for more general Gaussian processes defined on an interval  \cite{Kimeldorf1971}.
Kailath and his student Howard Weinert (PhD 1970, Stanford) expanded the framework by introducing more general splines and recursive algorithms \cite{Weinert1974}, motivated by applications in signal processing and optimal control \cite{Weinert1976}.
Wahba's formalization of the statistical optimality of splines was very impactful; it motivated the extension of the framework to higher dimensions \cite{Wahba1990, Myers1992} and, more, generally, the very successful use of kernel methods in statistics and classic machine learning \cite{Berlinet2004,Hofmann2008}.

In this chapter, we present
a unifying account of these various topics from the higher-level perspective of operators acting on nuclear spaces.
To reassure the readers who are not familiar with these notions, our intent is to recover the formal simplicity of the finite-dimensional setting. For reference purposes, the latter is reviewed in the appendix, which may also serve as a roadmap for the chapter, with a concrete translation of the main results of the theory.
While our treatment requires some level of abstraction, it allows us to limit the mathematical assumptions to the bare minimum, with the following payoffs. \begin{itemize}
\item The generality and completeness of the formulation, which is inherited from the use of an abstract nuclear space $\Spc S$ in the statement of all main results.
This covers the basic textbook setting with  $\Spc S=\R^N$, discrete time signals with $\Spc S=\Spc S(\Z)$, periodic signals as in \cite{Badoual2018}
with $\Spc S=\Spc S(\mathbb{T})=C^\infty(\mathbb{T})$ and $\mathbb{T}=[0,2\pi]$,
continuous-domain signals with $\Spc S=\Spc S(\R)$, multidimensional signals (either continuous or discrete) with $\Spc S=\Spc S(\R^d)$
(resp., $\Spc S=\Spc S(\Z^d)$), and many more. 
\item The use of linear operators as alternative to reproducing kernels: In the continuum, this enables the consideration of a much broader class of objects than ordinary functions such as, for instance, white noise on the side of the stochastic processes, and ``pseudo-discrete'' signals that are linear combinations of shifted replicates of a generalized kernel (including the case of the Dirac impulse) on the side of the splines.
\item The consideration of a probing mechanism that involves continuous linear functionals---the generalization of observing the sample-values of a function---in the spirit of the $\Lop$g splines of Jerome and Schumaker \cite{Jerome1969}.
This makes the framework suitable not only for interpolation and machine learning, but also for the resolution of linear inverse problems such as the reconstruction of biomedical images \cite{Gupta2018,Unser_2020}.
\item The unified handling of quasi-invertible whitening/regularization operators, which are those that have a nontrivial, finite-dimensional null space. This results in a congruence between the {\em native space} of a given type of splines and a family of Gaussian processes that fulfill specific boundary conditions. The fact that such processes are necessarily non-stationary results in a significant extension of the classic equivalences uncovered by Wahba et al.\ \cite{Kimeldorf1970}. The canonical example that falls in this ``extended'' category is the Brownian motion $\Rand B(t)$. This process satisfies the boundary condition $\Rand B(0)=0$ and is whitened by $\Lop=\Dop=\frac{\dint}{\dint t}$: the operator that induces piecewise-linear splines. This allows us to recover L\'evy's classical result \cite{Levy1934} that predates both splines and the theory of stochastic processes: the MMSE estimator of $\Rand B(t)$ for $t\in[t_1,t_2]$ given the sample values $\Rand B(t_1)=b_1$ and $\Rand B(t_2)=b_2$ is the straight line that connects $b_1$ and $b_2$.
\end{itemize}
The chapter is organized as follows: Section \ref{Sec:Foundations} is devoted to the exposition of the mathematical framework, starting from ``conventional'' Hilbert spaces and then moving on to nuclear spaces. The key result
is Theorem \ref{Theo:conditionPos}, which specifies the ``native'' Hilbert spaces that will be our substitute for the RHKS of the classic theory. In Section \ref{Sec:Deterministic}, we develop the deterministic theory of generalized splines associated with a ${\V p}$-admissible operator $\Lop$, with our Definition \ref{Def:pAdmissibility} of admissibility being the least constraining one to date. The native spaces for these splines are characterized in Theorem \ref{Theo:FactoRiesz} with the primary part of the space being isometric\footnote{This property is fundamental as it also guarantees the existence of a corresponding innovations representation in the stochastic counterpart of the theory.} to $L_2$.  The characterization of the corresponding splines then easily follows via the use of standard Hilbert-space techniques.
In Section \ref{Sec:GGP}, we use the same operators and functional tools to specify generalized Gaussian processes (GGP).
The characterization and the proof of existence of these objects (Theorem \ref{Theo:GGaussProcess}) heavily relies on Gelfand and Vilenkin's theory of generalized stochastic processes
\cite{Gelfand-Villenkin1964} and the use of the characteristic functional---the infinite-dimensional generalization of the characteristic function of probabilists (see also \cite{Unser2014book}).
We then close the circle by showing that the MMSE estimator of a GGP from a given collection of (noisy) linear measurements is a generalized spline with a regularization operator $\Lop$ that matches the whitening operator of the process.
\section{Mathematical Foundations}
\label{Sec:Foundations}
\subsection{Hilbert Spaces and Riesz Conjugates}
\begin{definition}[Semi-inner product] 
Let $\Spc H$ be a (real-valued) vector space. 
Then, the map $\langle \cdot,\cdot \rangle_\Spc H: \Spc H \times \Spc H \to \R$ is called a {\em semi-inner product} if it satisfies the three following properties for any $f, f_1,f_2 \in \Spc H$ and $\alpha \in \R$. 
\begin{enumerate}
\item Linearity: $\langle \alpha f_1 + f_2, f \rangle_{\Spc H}=\alpha \langle f_1, f \rangle_{\Spc H} +\langle f_2, f \rangle_{\Spc H}$.
\item 
Symmetry:  $\langle f_1,f_2\rangle_{\Spc H}=\langle f_2,f_1\rangle_{\Spc H}.$
\item Semi-positive definiteness: $|f|_{\Spc H}^2=\langle f,f\rangle_{\Spc H}\ge 0$.
\end{enumerate}
If the third condition can be replaced by the
stricter variant
\begin{itemize}
\item positive-definiteness: $\langle f,f\rangle_{\Spc H}> 0$  for all $f \ne 0$,
\end{itemize}
then $\langle \cdot,\cdot \rangle_\Spc H$ is an {inner product}, in which case
the induced seminorm $|f|_{\Spc H}$ becomes a Hilbertian norm denoted by $\|f\|_{\Spc H}=\sqrt{\langle f,f\rangle_{\Spc H}}$.
\end{definition}
A Hilbert space is a complete vector space equipped with a norm induced by an inner product. It is denoted by $(\Spc H,\|\cdot\|_{\Spc H})$ or, simply, by $\Spc H$ (for short). The Hilbert space $\Spc H$ has a unique topological dual
$\Spc H'$, which is itself a Hilbert space equipped with the dual norm $\|\cdot\|_{\Spc H'}$ (see \eqref{Eq:DualNorm} below).
Formally, an element $\nu$ of the dual space $\Spc H'$ is a continuous linear functional $\nu: \Spc H \to \R$. 
Likewise, since all Hilbert spaces are reflexive, we can view any element $f \in \Spc H=\Spc H''=(\Spc H')'$ as a continuous linear functional $f: \Spc H'\to \R$.
The bilateral character of this association is described by the {\em duality product}
\begin{align}
\Spc H' \times \Spc H \to \R: (\nu,f) \mapsto \langle \nu, f\rangle_{\Spc H' \times \Spc H}=\langle f,\nu\rangle_{\Spc H \times \Spc H'}\in \R,
\end{align}
which is a map that is linear and continuous in both arguments. While the duality pairing and the inner product $\langle \cdot,\cdot\rangle_\Spc H$ are both bilinear forms, they are fundamentally distinct because the former connects elements living in different complementary spaces with the concept of duality pairing also applying to more general topological spaces such as Banach and nuclear spaces \cite{Rudin1991}.

To avoid notational overload, we shall henceforth drop the subscript in the specification of the duality product, with the understanding that the first argument is a linear functional that acts on the second argument, as in  $\nu: f \mapsto \langle \nu, f\rangle$, where $f \in \Spc H$ usually also has a concrete identification as a vector or a function. The specification of these linear functionals then naturally leads to the  definition of the 
dual norm
\begin{align}
\label{Eq:DualNorm}
\|\nu\|_{\Spc H'}\eqdef\sup_{f\in\Spc H\backslash\{0\}} 
\frac{\langle \nu,f\rangle}{\|f\|_{\Spc H}}. 
\end{align}

A fundamental result in the theory of Hilbert spaces is that $\Spc H'$ is isometrically isomorphic to $\Spc H$. Specifically, there exists a unitary transform $\Op J_{\Spc H'}: \Spc H' \to \Spc H$ (the so-called {\em Riesz map}) such that
\begin{align}
\forall \nu_1,\nu_2 \in \Spc H': \langle \nu_1, \nu_2\rangle_{\Spc H'}=\langle \Op J_{\Spc H'}\{\nu_1\},\Op J_{\Spc H'}\{\nu_2\}\rangle_{\Spc H}=\langle \nu^\ast_1, \nu^\ast_2\rangle_{\Spc H}.
\label{Eq:Rieszequival}
\end{align}
Conversely, we may define the Riesz map by setting $\Op J_{\Spc H'}\{\nu\}=\nu^\ast$, where $\nu^\ast$ (the {\em Riesz conjugate} of $\nu \in \Spc H'$) is the unique element of $\Spc H$ such that \eqref{Eq:Rieszequival} with $\nu=\nu_1=\nu_2$ holds.
\begin{theorem}[Riesz map and conjugates]
\label{Def:DualMap}
Let $(\Spc H,\Spc H')$ be a dual pair of Hilbert spaces.
\begin{enumerate}
\item Isomorphism: There exists a unique linear unitary map $\Op J_{\Spc H}=\Op J_{\Spc H'}^{-1}: \Spc H \to \Spc H'$ such that $\Spc H'=\Op J_{\Spc H}(\Spc H)$ and $\Spc H=\Op J^{-1}_{\Spc H}(\Spc H')$.
\item Unicity of Riesz conjugate: Any $\nu \in \Spc H'$ has a unique representer
$\nu^\ast=\Op J_{\Spc H'}\{\nu\} \in \Spc H$ such that $\|\nu\|_{\Spc H'}=\|\nu^\ast\|_{\Spc H}$ (isometry) and $\langle \nu,\nu^\ast \rangle=\|\nu\|^2_{\Spc H'}=\langle \nu,\nu \rangle_{\Spc H'}.$
\item From duality to inner products: For any $(\nu,f) \in \Spc H' \times \Spc H$, we have that 
$$\langle \nu,f \rangle= \langle \nu^\ast ,f \rangle_{\Spc H}=\langle \nu ,f^\ast \rangle_{\Spc H'}  \quad \mbox{ with }\quad f^\ast=\Op J_{\Spc H}\{f\}.$$
\end{enumerate}
\end{theorem}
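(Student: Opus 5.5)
The plan is to build everything from the classical Riesz representation theorem. First I define $\Op J_{\Spc H}$ directly: for $f\in\Spc H$, let $\Op J_{\Spc H}\{f\}=f^\ast$ be the functional $g\mapsto\langle f,g\rangle_{\Spc H}$. It is linear in $g$. By Cauchy--Schwarz, $|\langle f,g\rangle_{\Spc H}|\le \|f\|_{\Spc H}\|g\|_{\Spc H}$, so $f^\ast\in\Spc H'$ with $\|f^\ast\|_{\Spc H'}\le\|f\|_{\Spc H}$. Testing \eqref{Eq:DualNorm} at $g=f$ gives equality, so $\Op J_{\Spc H}$ is a linear isometry. Linearity of $f\mapsto f^\ast$ follows from linearity of the inner product in its first argument. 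Isometry gives injectivity.

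The main step, and the only nontrivial one, is surjectivity: every $\nu\in\Spc H'$ has the form $\langle \nu^\ast,\cdot\rangle_{\Spc H}$. I will use the standard orthogonal-projection argument, which is where completeness enters.
\begin{enumerate}
\item If $\nu=0$, take $\nu^\ast=0$.
\item Otherwise $\ker\nu$ is a closed proper subspace, because $\nu$ is continuous.
\item The projection theorem in the Hilbert space $\Spc H$ gives a unit vector $u\perp\ker\nu$.
\item For any $g\in\Spc H$, the vector $\langle\nu,g\rangle u-\langle\nu,u\rangle g$ lies in $\ker\nu$. Taking its inner product with $u$ gives $\langle\nu,g\rangle=\langle \langle\nu,u\rangle u,g\rangle_{\Spc H}$.
\item Hence $\nu^\ast=\langle\nu,u\rangle u$.
\end{enumerate}
Uniqueness holds because if $\langle h,g\rangle_{\Spc H}=0$ for all $g$, then taking $g=h$ gives $h=0$.

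So $\Op J_{\Spc H}$ is a bijective linear isometry, and I set $\Op J_{\Spc H'}=\Op J_{\Spc H}^{-1}$. I equip $\Spc H'$ with the inner product $\langle\nu_1,\nu_2\rangle_{\Spc H'}\eqdef\langle\nu_1^\ast,\nu_2^\ast\rangle_{\Spc H}$. By the polarization identity, this is the unique inner product inducing the dual norm, and it makes $\Op J_{\Spc H}$ unitary; this is \eqref{Eq:Rieszequival}. Uniqueness of the unitary map in Item 1 holds in the sense that it is pinned down by the requirement $\langle \Op J_{\Spc H}\{f\},g\rangle=\langle f,g\rangle_{\Spc H}$, which is the defining property used above.

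Items 2 and 3 then follow by direct evaluation:
\begin{itemize}
\item $\langle\nu,f\rangle=\langle\nu^\ast,f\rangle_{\Spc H}$ is the representation just proved.
\item $\langle \nu,f^\ast\rangle_{\Spc H'}=\langle\nu^\ast,(f^\ast)^\ast\rangle_{\Spc H}=\langle\nu^\ast,f\rangle_{\Spc H}$, because $\Op J_{\Spc H'}\Op J_{\Spc H}=\Identity$.
\item Setting $f=\nu^\ast$ gives $\langle\nu,\nu^\ast\rangle=\|\nu^\ast\|_{\Spc H}^2=\|\nu\|_{\Spc H'}^2=\langle\nu,\nu\rangle_{\Spc H'}$.
\end{itemize}
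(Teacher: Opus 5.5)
The paper gives no proof of this statement and simply invokes it as the classical Riesz representation theorem; your projection-theorem argument is the standard proof of that result and is correct, including your decision to read the uniqueness in Item 1 relative to the defining identity $\langle \Op J_{\Spc H}\{f\},g\rangle=\langle f,g\rangle_{\Spc H}$ (taken literally, $-\Op J_{\Spc H}$ is also a unitary map onto $\Spc H'$). If you want Item 2 in the stronger sense that $\nu^\ast$ is singled out by the two listed conditions alone, add the one-line check $\|h-\nu^\ast\|_{\Spc H}^2=\|h\|_{\Spc H}^2-2\langle \nu,h\rangle+\|\nu^\ast\|_{\Spc H}^2=0$, valid for any $h\in\Spc H$ with $\|h\|_{\Spc H}=\|\nu\|_{\Spc H'}$ and $\langle\nu,h\rangle=\|\nu\|_{\Spc H'}^2$.
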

Note that the first property implies invertibility with $\Op J_{\Spc H'}^{-1}=\Op J_{\Spc H}: \Spc H \to \Spc H'$ or, equivalently, $\nu^{\ast\ast}=\nu$  for all $\nu \in \Spc H'$.  

Since Hilbert spaces are reflexive, the role of $\Spc H=(\Spc H')'$ and $\Spc H'$ in Theorem \ref{Def:DualMap}
is interchangeable. 
The canonical example is $(\Spc H, \Spc H')=\big(L_2,L_2\big)$ with $\Op J_{L_2}=\Identity$
and $L_2=\ell_2(\Z)$ or $L_2=L_2(\R)$. 

\subsection{Extended Nuclear-Space Framework}
To bypass some of the technicalities of classic functional analysis and to offer the greatest level of generality, we are considering an abstract formulation that involves a dual pair $(\Spc S, \Spc S')$ of complete nuclear spaces, with the objects of interest living in a Hilbert space $\Spc H$ with the property that $ \Spc S\embedD \Spc H \embedD \Spc S'$, where the symbol ``$\embedD$'' indicates a topological embedding that is both dense and continuous. We also require that $\Spc S$ be canonically embedded in $\Spc S'$ so that every test function $\varphi$ can be identified as a
distribution with $\varphi \in \Spc S': \phi \mapsto \langle \varphi, \phi\rangle$, $\phi \in \Spc S'$. This configuration has two important implications: (i) it gets transferred to the dual spaces with  
$\Spc S''=\Spc S \embedD \Spc H' \embedD \Spc S'$ (by duality); and (ii) it enables the specification of the Hilbert spaces of our interest by completion: $\Spc H=\overline{(\Spc S,\|\cdot\|_{\Spc H})}$
and $\Spc H'=\overline{(\Spc S,\|\cdot\|_{\Spc H'})}$.
This construction mechanism is our alternative to the classic use of reproducing kernels, and we shall see that it is much more powerful.

While the terminology ``nuclear'' may be unfamiliar to most readers, they must have been exposed to at least two nuclear spaces: $\R^N$ (for linear algebra) and $\Spc S'(\R)$ (Schwartz' space of tempered distributions) which, for instance, allows for the rigorous definition of the Dirac impulse $\delta$ and its derivatives. This category of spaces was revealed by the famous mathematician Grothendieck, who devoted his PhD (under the supervision of L. Schwartz) to the clarification and extension of the theory of locally convex topological spaces \cite{Grothendieck1955}. 
The bottom line for practitioners is that the analysis methods for nuclear spaces are the same as in distribution theory with the only critical issue being 
to ensure the continuity of the underlying transformations (linear operators and functionals).
The payoff is that it simplifies many derivations while providing a much higher level of generality than the classic Lebesgue's approach to functional analysis.

The precise definition of a nuclear space is rather abstract (which explains why they are under-used) and beyond the scope of this chapter. We therefore focus on the part that is relevant  to our purpose: the unique and powerful functional properties associated with the framework \cite{Horvath1966topological,Treves2006}.
\begin{enumerate}
\item Complete nuclear spaces are reflexive with $(\Spc S')'=\Spc S$ and with their dual being nuclear as well.
\item Nuclear spaces have a special locally convex topology induced by an infinite family of seminorms.
\item Montel property: The nuclear topology induces an equivalence between strong and weak($\ast$) convergence, as if we were in finite dimensions.
\item  Nuclear spaces allow for an exhaustive characterization (the generalization of Schwartz' famous kernel theorem) of all continuous operators $\Lop:  \Spc S \to \Spc S'$.
\item The dual space $\Spc S'$ is large enough to contain all the (infinite-dimensional) Hilbert or Banach spaces of interest to
analysts, while the predual $\Spc S$---the space of ``test'' functions---is typically small enough to be included in all of them, as exemplified by the  relation $\Spc S \subset \Spc H=\overline{(\Spc S,\|\cdot\|_{\Spc H})} \subset \Spc S'$.
\end{enumerate} 
Properties 3-5 are remarkable because they hold neither in Hilbert nor in Banach spaces unless the dimension is finite (i.e., $\Spc S=\R^N$).
Property 3, in particular, makes the investigation of continuity and convergence much easier than in classic functional analysis.
Properties 4 and 5 enable the formulation of ``complete'' theories (as the present one), with no more room for extensions, which is the reason why theoreticians sometimes say that Grothendieck killed research in functional analysis. \\[1ex]
{\em The Pivot Space $L_2=(L_2)'$}: Any given dual pair $(\Spc S,\Spc S')$
 of nuclear spaces induces a canonical Hilbert space $L_2$ defined as
\begin{align}
\label{Eq:L2}
L_2=\overline{(\Spc S,\|\cdot\|_2)}  \mbox{ with } \|\varphi\|_2=\sqrt{\langle\varphi,\varphi \rangle} \mbox{ for any } \varphi \in \Spc S.
\end{align} 
This pivot space, which is central to our formulation, is equal to its own dual because its topology is inherited from the $(\Spc S,\Spc S')$-duality product. For instance, the concrete space defined by \eqref{Eq:L2} with $\Spc S=\Spc S(\R)$ is precisely $L_2(\R)$ (Lebesgue's space of finite-energy functions). The present definition is remarkable for its conciseness and the fact it does not require any knowledge of the Lebesgue integral. This equivalence can be taken as a perfect example of how the higher-level of abstraction of nuclear spaces induces ``formal simplicity.''
\\[1ex]
{\em Link with RKHS}: The concrete nuclear spaces of our interest are of the form $\Spc S=\Spc S(\mathbb{I})$ with $\mathbb{I}$ a set such as $\{1,\dots,N\}$ (for vectors), $\Z$ (for discrete signals), $\mathbb{T}=[0,2\pi]$ (for periodic functions), $\R^d$ (for multidimensional functions), $\Spc S=\Spc S(\mathbb{S}^{d-1})$ (for working on the (hyper-)sphere), among others. This signifies that any $\varphi \in \Spc S$ is identifiable as a function $\varphi: \mathbb{I}\to \R$ on the domain $\mathbb{I}$, with the nuclear topology imposing that it be ``smooth'' and ``rapidly decaying''. Correspondingly, for any $x_0 \in \mathbb{I}$, there exists an evaluation functional $\delta_{x_0}: \varphi \mapsto \langle \delta_{x_0},\varphi\rangle=\varphi(x_0)$ (the equivalent of the Dirac impulse shifted by $x_0$) that is included in $\Spc S'(\mathbb{I})$. These functionals are the formal mechanism for sampling (or evaluating) the test functions at any location $x_0$. By continuity, this behavior extends to a subclass of Hilbert spaces $\Spc H \subset \Spc S'(\mathbb{I})$, which coincides with the classic family of RKHS. The members of RKHS are ordinary {\em continuous} functions, as opposed to $L_2$-functions and, more generally, distributions.

%
\subsection{From Nuclear to Hilbert Spaces}
\label{Sec:NtoHilbert}
Notation: We use $\V \nu=(\nu_1,\dots,\nu_{M})$ to denote a collection of linear functionals with $\nu_n \in \Spc S'$ (or $\Spc H'$), $m=1, \dots, M$.
Under the implicit assumption that the $\nu_m$ are linearly independent, we then define: 
\begin{enumerate}
\item the linear subspace
$\Spc N_{\V \nu}={\rm span}\{\nu_m\}_{m=1}^{M} \subset \Spc S'$;
\item the linear vector-functional
\begin{align}
\V \nu: \varphi \mapsto \V \nu(\varphi)=\big (\langle \nu_1, \varphi\rangle,\dots, \langle \nu_{M}, \varphi\rangle\big):  \Spc S \to \R^{M};
\end{align}
\item the symmetric, finite-rank operator $\Op R_{\V \nu}: \Spc S \to \Spc N_{\V \nu}\subset \Spc S'$ (or  $\Op R_{\V \nu}: \Spc H \to \Spc N_{\V \nu}\subset \Spc H'$) with
\begin{align}
\Op R_\V \nu: \varphi \mapsto \Op R_\V \nu\{\varphi\}=\sum_{m=1}^M\nu_m \langle \nu_m, \varphi\rangle.
\end{align}
\end{enumerate}
Our abstract definition of splines involves a specific $\V \nu: \Spc H \to \R^M$
and a regularization operator $\Lop: \Spc H \to L_2$ where $\Spc H=\Spc H_\Lop \subset \Spc S'$ is the so-called {\em native space} to be identified in Section \ref{Sec:NativeSpaces}. These generalized splines are parameterized by a vector $\M f\in \R^M$ (typically, a data point) and are the (unique) solution of the optimization problem
\begin{align}
\label{Eq:GsplineInt}
\min_{f \in \Spc H} \|\Lop f\|_{L_2} \mbox{ s.t. } \V \nu(f)=\M f.
\end{align}
The most interesting (and also more challenging) forms of this problem are the ones where $\Lop$ has a nontrivial null space
$\Spc N_{\Lop}=\Spc N_{\V p}$ that is spanned by a finite-dimensional basis $\V p=(p_1,\dots,p_{N_0})$ with
$p_n \in \Spc S'$ and $0 < N_0 <M$. 

As first step, we show how $\Spc N_{\Lop}$ can be turned into a Hilbert space, the non-standard part being that the
$p_n$ (which are typically Taylor monomials) are not included in $L_2$, so that they cannot be orthogonalized. Our solution is to 
rely on some biorthogonal basis $\V \phi=(\phi_1,\dots,\phi_{N_0})$.
The latter is generally not unique unless
one forces the $\phi_n$ to be part of $\Spc N_{\V p}$, which is only possible if $\Spc N_{\V p}\subset L_2$.

\begin{theorem}[Finite-dimensional Hilbert subspaces of $\Spc S'$]
\label{Theo:FiniteDimHilbert}
Let $\V p=(p_n)$ be a basis of $\Spc N_{\V p}={\rm span}\{p_n\}_{n=1}^{N_0}\subset \Spc S'$. Then, there exits
 a vector of functionals $\V \phi=(\phi_n)$ with $\phi_1,\dots,\phi_{N_0}\in \Spc S$ such that
 $\langle p_n, \phi_m\rangle = \delta_{m-n}$ (biorthogonality).
 
The resulting biorthogonal system $(\V p, \V \phi)$ is called {\em universal} and it allows us to identify the dual pair of 
Hilbert spaces  $\Spc H_0=(\Spc N_{\V p},|\cdot|_{\V \phi}) \subset \Spc S'$ and
$\Spc H'_0=(\Spc N_{\V \phi},|\cdot|_{\V p}) \subset \Spc S$ equipped with the (semi-)inner
products
\begin{align}
\langle f_1,f_2\rangle_{\V \phi} = \sum_{n=1}^{N_0} \langle \phi_n,f_1 \rangle\langle \phi_n,f_2 \rangle\\ 
\langle \varphi_1,\varphi_2\rangle_{\V p} = \sum_{n=1}^{N_0} \langle p_n,\varphi_1 \rangle \langle p_n,\varphi_2 \rangle,
\end{align}
which are well-defined for any $f_1,f_2 \in \Spc S'$ and $\varphi_1,\varphi_2 \in \Spc S$.
The argument generalizes to $\Spc N_{\V p}\subset \Spc H$, and $\Spc N_{\V \phi}\subset \Spc H'$ where $\Spc H$ is any Hilbert space such that $\Spc S \embedC \Spc H \embedC \Spc S'$, which  enables the use of {\em non-universal} biorthogonal systems.
\end{theorem}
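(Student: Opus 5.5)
The existence of the biorthogonal system is a statement about finite-dimensional linear algebra inside the dual pair $(\Spc S,\Spc S')$, and I would prove it by reduction to a Gram-type matrix. Consider the linear map $\V p:\Spc S\to\R^{N_0}$, $\varphi\mapsto(\langle p_1,\varphi\rangle,\dots,\langle p_{N_0},\varphi\rangle)$. The key claim is that this map is surjective. Suppose it is not. Then its range is a proper subspace of $\R^{N_0}$, so there is a nonzero $\V a\in\R^{N_0}$ orthogonal to it, i.e., $\langle \sum_n a_n p_n,\varphi\rangle=0$ for all $\varphi\in\Spc S$. Since $(\Spc S')'=\Spc S$, the elements of $\Spc S'$ are separated by $\Spc S$: a distribution that vanishes on every test function is zero. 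This gives $\sum_n a_np_n=0$, which contradicts the linear independence of $\V p$.

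Surjectivity lets us pick $\phi_m\in\Spc S$ with $\V p(\phi_m)=\V e_m$, which is exactly $\langle p_n,\phi_m\rangle=\delta_{m-n}$. An equivalent, more constructive route is to choose $\psi_1,\dots,\psi_{N_0}\in\Spc S$ such that the matrix $\M G=[\langle p_n,\psi_k\rangle]$ is invertible, and then set $\V\phi=\M G^{-1}$ applied to $\V\psi$. Either way, the $\phi_m$ are linearly independent, since applying $\V p$ to a vanishing combination of them kills all its coefficients. The only genuinely non-algebraic ingredient is the separation property, and I expect it to be the main point to justify. It follows from reflexivity (Property 1) or, more simply, from the definition of $\Spc S'$ as the dual of $\Spc S$ together with the identification of distributions through their action on test functions.

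Next I would verify the Hilbert structures. The bilinear form $\langle f_1,f_2\rangle_{\V\phi}$ is well-defined on all of $\Spc S'$ because $\phi_n\in\Spc S$, and it is symmetric and positive semidefinite, so it is a semi-inner product. It is definite on $\Spc N_{\V p}$. Indeed, writing $f=\sum_n c_np_n$, biorthogonality gives $\langle\phi_n,f\rangle=c_n$, hence $|f|_{\V\phi}^2=\|\M c\|^2$. So $\V c\mapsto \sum_n c_np_n$ is an isometry from $\R^{N_0}$ onto $\Spc H_0$, and $\Spc H_0$ is a Hilbert space; completeness is automatic in finite dimension. The same computation with the roles swapped makes $\Spc H'_0=(\Spc N_{\V\phi},|\cdot|_{\V p})$ isometric to $\R^{N_0}$, using that $\langle p_n,\sum_m d_m\phi_m\rangle=d_n$.

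For duality, I would let each $\varphi=\sum_m d_m\phi_m\in\Spc H'_0$ act on $f=\sum_n c_np_n$ via $\langle\varphi,f\rangle=\sum_n c_nd_n$. This pairing identifies $\Spc H'_0$ with the dual of $\Spc H_0$, and the dual norm \eqref{Eq:DualNorm} coincides with $|\varphi|_{\V p}$ by Cauchy--Schwarz in $\R^{N_0}$. In particular $p_n$ and $\phi_n$ are mutual Riesz conjugates, consistent with Theorem \ref{Def:DualMap}.

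Finally, for the generalization, I would replace $\Spc S$ by $\Spc H'$ and $\Spc S'$ by $\Spc H$. The surjectivity argument still works because $\Spc H'$ separates the points of $\Spc H$ (Hahn--Banach, or Riesz). This yields $\phi_n\in\Spc H'$ that need not lie in $\Spc S$, which gives the \emph{non-universal} biorthogonal systems. The rest of the argument goes through verbatim.
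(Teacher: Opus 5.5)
Your proof is correct. Its second half, the substitution showing that $\sum_n c_n p_n\mapsto \M c$ and $\sum_m d_m\phi_m\mapsto \M d$ are isometries onto $\R^{N_0}$ with the pairing reducing to $\M c^\Tr\M d$, is exactly the paper's ``simple substitution.''

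Where you differ is the existence step. The paper's sketch attributes it to the Hahn--Banach theorem, a single citation that covers universal and non-universal systems at once. You observe instead that the $p_n$ are, by definition, linear functionals on $\Spc S$. Their linear independence alone therefore forces the joint evaluation map $\V p:\Spc S\to\R^{N_0}$ to be onto. The ``separation'' you need is only that a functional vanishing on all of $\Spc S$ is the zero element of $\Spc S'$. So in the universal case neither Hahn--Banach nor reflexivity is required, and the argument is purely algebraic.

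Your route also shows where analysis genuinely enters: only when the $p_n$ are regarded as vectors of $\Spc H$ and one wants separating elements of $\Spc H'$ (Riesz or Hahn--Banach). Even that is unnecessary for mere existence. Under the paper's standing embedding $\Spc S\subset\Spc H'$, your universal system already qualifies. The real purpose of the generalization is to allow choices such as $\phi_1=\delta\notin\Spc S$ for $\Lop=\Dop$.

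One small qualification to ``verbatim'': for $\phi_n\in\Spc H'\setminus\Spc S$, the form $\langle\cdot,\cdot\rangle_{\V\phi}$ is defined only on $\Spc H$, not on all of $\Spc S'$. That is exactly the distinction the word \emph{universal} records.
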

\begin{proof}[\small Sketch of proof]
{\small 
The critical part is the existence of a biorthogonal system (universal or not), which follows from the Hahn-Banach theorem. The other properties can be verified by simple substitution, with the inner product between two generic elements of $\Spc N_{\V p}$ being
\vspace*{-3ex}
\begin{align}
\big\langle \sum_{n=1}^{N_0}a_n p_n,\sum_{m=1}^{N_0}b_m p_m\big\rangle_{\V \phi}= \sum_{n=1}^{N_0}a_n b_n=\M a^\Tr\M b,
\end{align}
 which also shows that $\Spc H_0$ is isomorphic to 
$\R^{N_0}$.
} 
\end{proof}

The Hilbert spaces $\Spc H_0$ and $\Spc H_0'$ in Theorem \ref{Theo:FiniteDimHilbert} are in isometric correspondence with the relevant Riesz maps being
\begin{align}
\Op J_{\Spc H_0}=\sum_{n=1}^{N_0} \phi_n \langle \phi_n,\cdot \rangle=\Op R_{\V \phi}: \Spc H_0 \to \Spc H'_0\\\
\Op J_{\Spc H'_0}=\sum_{n=1}^{N_0} p_n \langle p_n,\cdot \rangle=\Op R_{\V p}: \Spc H'_0 \to \Spc H_0.
\end{align}
We also note that the biorthogonality condition 
 is equivalent to $\Op R_{\V p}\Op R_{\V \phi}=\Identity \mbox{ on } \Spc N_{\V p}$ (resp., $\Op R_{\V \phi}\Op R_{\V p}=\Identity \mbox{ on } \Spc N_{\V \phi}$), which leads to the specification of the projection operators
\begin{align}
{\rm Proj}_{\Spc N_\V p}=\Op R_{\V p}\Op R_{\V \phi}=\sum_{n=1}^{N_0} p_n \langle \phi_n,\cdot \rangle: \Spc H \to \Spc N_{\V p}\\
{\rm Proj}_{\Spc N_\V \phi}=\Op R_{\V \phi}\Op R_{\V p}=\sum_{n=1}^{N_0} \phi_n \langle p_n,\cdot \rangle: \Spc H' \to \Spc N_{\V \phi},
%
\end{align}
with the latter being the transpose of the former.

To construct the Hilbert space $ \Spc H \embedC \Spc S'$ (or rather its dual $\Spc H'$), we shall rely
on a special type of operator that will then be extended to yield a Riesz map.
\begin{definition}[Positive-definite operator]
\label{Def:CondPositiveDef2}
Let  $\Op A$ be a continuous operator $\Spc S \to \Spc S'$ and $\Spc N_{\V p}$ a finite-dimensional subspace of $\Spc S'$ that is spanned by
$\V p=(p_1,\dots,p_{N_0})$.
The operator $\Op A$ is said to be
\begin{itemize}
\item symmetric or self-adjoint if, for all $\varphi_1,\varphi_2 \in \Spc S$,
$
\langle \Op A \varphi_1,\varphi_2\rangle=\langle \Op A \varphi_2,\varphi_1\rangle;
$
\item positive-definite if, for any  $\varphi \in \Spc S\backslash\{0\}
$,
$
\langle \Op A \varphi,\varphi\rangle > 0$;

\item positive-semi-definite if, for any  $\varphi \in \Spc S$,
$
\langle \Op A \varphi,\varphi\rangle \ge 0;
$\item $\V p$-conditionally positive-definite if
\begin{align}
\label{Eq:PositiveDefinite}
\langle \Op A \varphi,\varphi\rangle > 0
\mbox{ for all }\varphi \in  \Spc S_{\V p} \backslash\{0\},
\end{align}
where $\Spc S_{\V p}=\Spc S \cap \Spc N_{\V p}^\perp=\{\varphi \in \Spc S: \V p(\varphi)=\V 0\}$. 
\end{itemize}
\end{definition}
Given a biorthognal system $(\V p,\V \phi)$ as in Theorem \ref{Theo:FiniteDimHilbert}, we now introduce a corrected form of the ``Gram'' operator 
\begin{align}
\label{Eq:Apadmis}
\Op A_{\V \phi}&=(\Identity-\Op R_{\V p}\Op R_{\V \phi})\,\Op A\,(\Identity-\Op R_{\V \phi}\Op R_{\V p}): \Spc S \to \Spc S'
\end{align}
that is equivalent to $\Op A$ on $\Spc S_{\V p}$, but more convenient mathematically because
it annihilates every member of $\Spc N_{\V \phi}$.

\begin{theorem}[Operator-based construction of Hilbert spaces]
\label{Theo:conditionPos}
Let $\Op A: \Spc S \to \Spc S'$ be a $\V p$-conditionally positive-definite operator.
Then, for any given biorthogonal system $(\V p, \V \phi)$, the bilinear form
\begin{align}
\label{Eq:BPinnerA}
\langle f,g\rangle_{\Spc H'}=\langle(\Op A_{\V \phi} + \Op R_{\V p}) f,g\rangle
\end{align}
is a valid inner product on $\Spc S$. By completion, the latter specifies the Hilbert space $\Spc H'=\overline{(\Spc S,\|\cdot\|_{\Spc H'})}=\Spc H_{\Op A} \oplus \Spc N_{\V \phi}$, with
$\Spc H_{\Op A}=\{f \in \Spc H': \V p(f)=\V 0\}=\overline{(\Spc S_{\V p},\|\cdot\|_{\Spc H'})}$ the Hilbert space that is the orthogonal complement of $ \Spc N_{\V \phi}$ in $\Spc H'$. 

The predual of $\Spc H'$
is the Hilbert space $\Spc H=\Spc H'_{\Op A} \oplus \Spc N_{\V p}$ associated with the inner product $\langle f,g\rangle_{\Spc H}=\langle(\Op A^{-1} + \Op R_{\V \phi}) f,g\rangle$,
where $\Op A^{-1}$ is the unique operator such that $\Op A^{-1}|_{\Spc H'_{\Op A}}=\Op A_{\V \phi}^{-1}=\Op J_{\Spc H'_{\Op A}}: \Spc H'_{\Op A} \to \Spc H_{\Op A}$ (inverse of the Riesz map $\Op J_{\Spc H_{\Op A}}$) and $\Op A^{-1}|_{\Spc N_{\V p}}=0$ so that its null space is precisely $\Spc N_{\V p}$.

\end{theorem}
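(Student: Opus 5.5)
The plan is to reduce everything to a direct-sum decomposition of $\Spc S$ induced by the biorthogonal system, and then to complete each piece separately. First, every $\varphi\in\Spc S$ splits uniquely as $\varphi=\varphi_0+\varphi_1$ with $\varphi_1=\Op R_{\V\phi}\Op R_{\V p}\varphi=\sum_n \phi_n\langle p_n,\varphi\rangle\in\Spc N_{\V\phi}$ and $\varphi_0=(\Identity-\Op R_{\V\phi}\Op R_{\V p})\varphi\in\Spc S_{\V p}$. The latter membership follows from biorthogonality: $\langle p_m,\varphi_0\rangle=\langle p_m,\varphi\rangle-\sum_n\langle p_m,\phi_n\rangle\langle p_n,\varphi\rangle=0$.

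Next we compute the form. For $\varphi,\psi\in\Spc S$, the definition of $\Op A_{\V\phi}$ and the fact that $(\Identity-\Op R_{\V p}\Op R_{\V\phi})$ is the transpose of $(\Identity-\Op R_{\V\phi}\Op R_{\V p})$ give
\begin{align}
\langle(\Op A_{\V\phi}+\Op R_{\V p})\varphi,\psi\rangle=\langle \Op A\varphi_0,\psi_0\rangle+\sum_{n=1}^{N_0}\langle p_n,\varphi\rangle\langle p_n,\psi\rangle .
\end{align}
Symmetry and bilinearity are then inherited from $\Op A$. For definiteness, $\langle(\Op A_{\V\phi}+\Op R_{\V p})\varphi,\varphi\rangle=\langle\Op A\varphi_0,\varphi_0\rangle+|\varphi|^2_{\V p}\ge0$. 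If this vanishes, then $\V p(\varphi)=\V 0$, so $\varphi_1=0$. Moreover $\langle\Op A\varphi_0,\varphi_0\rangle=0$ with $\varphi_0\in\Spc S_{\V p}$, so $\varphi_0=0$ by $\V p$-conditional positive-definiteness. Hence $\varphi=0$.

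The same identity shows that the two summands are orthogonal: $\langle\varphi_0,\psi_1\rangle_{\Spc H'}=\langle\Op A\varphi_0,0\rangle+\V p(\varphi_0)^\Tr\V p(\psi_1)=0$. The norm is therefore $\|\varphi\|^2_{\Spc H'}=\|\varphi_0\|^2_{\Spc H'}+|\varphi_1|^2_{\V p}$.

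Completion then gives $\Spc H'=\overline{(\Spc S_{\V p},\|\cdot\|_{\Spc H'})}\oplus\Spc N_{\V\phi}$, because the finite-dimensional piece is already closed and isometric to $\R^{N_0}$ (Theorem \ref{Theo:FiniteDimHilbert}). The functionals $f\mapsto\langle p_n,f\rangle$ satisfy $|\langle p_n,f\rangle|\le\|f\|_{\Spc H'}$ on $\Spc S$, so they extend continuously to $\Spc H'$. This identifies $\overline{\Spc S_{\V p}}$ with $\{f\in\Spc H':\V p(f)=\V0\}$: one inclusion follows by continuity; for the other, if $\V p(f)=\V 0$ then the $\Spc N_{\V\phi}$-component of $f$ vanishes. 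This subspace is $\Spc H_{\Op A}$, the orthogonal complement of $\Spc N_{\V\phi}$. To apply Theorem \ref{Def:DualMap} we also need $\Spc H'\embedC\Spc S'$, i.e.\ that the abstract completion injects into $\Spc S'$. I expect this to be the main obstacle. I would first try to argue that $\Op A_{\V\phi}+\Op R_{\V p}$ extends to the Riesz map $\Op J_{\Spc H'}:\Spc H'\to\Spc H$, with $\Spc S\subset\Spc H$ dense. If needed, I would take this as part of the standing framework $\Spc S\embedD\Spc H'\embedD\Spc S'$.

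For the predual, Theorem \ref{Def:DualMap} applied to $\Spc H'$ gives $\Spc H=(\Spc H')'$ with Riesz map $\Op J_{\Spc H}=\Op J_{\Spc H'}^{-1}$. By duality, the orthogonal decomposition of $\Spc H'$ transfers to $\Spc H=\Spc H'_{\Op A}\oplus\Spc N_{\V p}$. Here $\Spc N_{\V p}$ is dual to $\Spc N_{\V\phi}$ through the Riesz maps $\Op R_{\V p},\Op R_{\V\phi}$ of Theorem \ref{Theo:FiniteDimHilbert}, and $\Spc H'_{\Op A}$, the dual of $\Spc H_{\Op A}$, is the annihilator of $\Spc N_{\V\phi}$. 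The Riesz map $\Op J_{\Spc H'}$ is block-diagonal: it equals $\Op A_{\V\phi}$ on $\Spc H_{\Op A}$, the extension of $\varphi_0\mapsto\Op A\varphi_0$ projected by $(\Identity-\Op R_{\V p}\Op R_{\V\phi})$, and it equals $\Op R_{\V p}$ on $\Spc N_{\V\phi}$. Its inverse is therefore block-diagonal as well, with blocks $\Op A_{\V\phi}^{-1}=\Op J_{\Spc H'_{\Op A}}$ on $\Spc H'_{\Op A}$ and $\Op R_{\V\phi}$ on $\Spc N_{\V p}$.

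Defining $\Op A^{-1}$ as the first block extended by $0$ on $\Spc N_{\V p}$, we get $\Op J_{\Spc H}=\Op A^{-1}+\Op R_{\V\phi}$. The inner product on $\Spc H$ is then $\langle f,g\rangle_{\Spc H}=\langle\Op J_{\Spc H}f,g\rangle=\langle(\Op A^{-1}+\Op R_{\V\phi})f,g\rangle$. For this formula we still need $\Op R_{\V\phi}$ to vanish on $\Spc H'_{\Op A}$ and $\Op A^{-1}$ to vanish on $\Spc N_{\V p}$; both follow from the annihilator relations. Uniqueness of $\Op A^{-1}$ and the identification of its null space as exactly $\Spc N_{\V p}$ then follow from the injectivity of the Riesz map on $\Spc H'_{\Op A}$.
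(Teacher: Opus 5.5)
Your core argument is the paper's own, written out in more detail. It splits $\varphi=\varphi_0+\varphi_1$ with $\varphi_1=\Op R_{\V\phi}\Op R_{\V p}\varphi$, uses two seminorms that vanish on complementary summands, completes while preserving the direct sum, and obtains the predual by duality. Your explicit identification of $\overline{(\Spc S_{\V p},\|\cdot\|_{\Spc H'})}$ with $\{f\in\Spc H':\V p(f)=\V 0\}$, and of the two blocks of $\Op J_{\Spc H}=\Op A^{-1}+\Op R_{\V\phi}$, is sound. Taking $\Spc H'\embedC\Spc S'$ from the standing framework is also what the paper does.

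\textbf{The gap: non-universal biorthogonal systems.} The statement covers any biorthogonal system, and the paper's proof explicitly allows $\phi_n\in\Spc H'$ rather than $\phi_n\in\Spc S$. Your first step silently uses $\phi_n\in\Spc S$. Otherwise $\varphi_0=(\Identity-\Op R_{\V\phi}\Op R_{\V p})\varphi\notin\Spc S$, so $\Op A\varphi_0$ is undefined, and \eqref{Eq:BPinnerA} has no meaning until some $\Spc H'$ has already been built.

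\textbf{How the paper closes it.} First run your construction with a universal system. Then note that for $f$ with $\V p(f)=\V 0$ one has $(\Identity-\Op R_{\V\phi}\Op R_{\V p})f=f$, hence $\langle\Op A_{\V\phi}f,f\rangle=\langle\Op Af,f\rangle$ regardless of $\V\phi$. For another system $\tilde{\V\phi}$ with $\tilde\phi_n\in\Spc H'$, the new squared norm is $\|(\Identity-\Op R_{\tilde{\V\phi}}\Op R_{\V p})f\|^2_{\Spc H'}+\sum_n|\langle p_n,f\rangle|^2$. This is equivalent to the old norm, because $\Op R_{\tilde{\V\phi}}\Op R_{\V p}$ is a bounded finite-rank projection on $\Spc H'$ whose kernel is $\Spc H_{\Op A}$. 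Hence $\Spc H'$ and $\Spc H_{\Op A}$ are unchanged as sets; only the complement $\Spc N_{\tilde{\V\phi}}$ and the inner product change. Adding this step completes your proof.

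\textbf{A smaller point.} Symmetry of the form requires $\Op A$ to be symmetric on $\Spc S_{\V p}$, which is not part of $\V p$-conditional positive-definiteness in Definition \ref{Def:CondPositiveDef2}. You and the paper both assume it tacitly; it does hold for $\Op A=(\Lop^\ast\Lop)^{-1}$.
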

\begin{proof}[\small Proof]
{\small
We first select a universal system $\V \phi$ with $\phi_n \in \Spc S$, which ensures the continuity of
$\Op R_{\V \phi}: \Spc S' \to \Spc N_{\V \phi}$. Next, we define the seminorms $|\varphi|_{\Op A}=\sqrt{\langle \Op A_{\V \phi}\varphi,\varphi\rangle}$ and $|\varphi |_{\V p}=\sqrt{\langle \Op R_{\V p}\varphi,\varphi\rangle}$, with the latter reverting to the norm of the finite-dimensional Hilbert space $\Spc N_{\V \phi}$ for any $\varphi \in \Spc N_{\V \phi}$.
We then consider the direct-sum decomposition
$\Spc S=\Spc S_{\V p} \oplus \Spc N_{\V \phi}$ and observe that $|\varphi|_{\Op A}$ specifies a valid norm on
$\Spc S_{\V p}$ (from the definition of conditional positivity), while $|\phi|_{\Op A}$ vanishes for all $\phi \in \Spc N_{\V \phi}$ due to the reproduction condition $\Op R_{\V \phi}\Op R_\V p=\Identity$ on $\Spc N_{\V \phi}$. As for $|\varphi|_{\V p}$, we have exactly the reverse behavior with $|\varphi|_{\V p}=0$ for all $\varphi \in \Spc S_{\V p}$. These properties imply that
the augmented operator $\Op A_{\V \phi} + \Op R_{\V p}$ is positive-definite, with the induced norm
$\|\varphi\|_{\Spc H'}=\sqrt{|\varphi|^2_{\Op A}+|\varphi|^2_{\V p}}$ being compatible with the direct-sum decomposition.
This then allows us to specify the corresponding Hilbert spaces $\Spc H'=\overline{(\Spc S,\|\cdot\|_{\Spc H'})}$ and $\Spc H_{\Op A}=\overline{(\Spc S_{\V p},\|\cdot\|_{\Spc H'})}=\overline{(\Spc S_{\V p},|\cdot|_{\Op A})}$ by completion, with the direct-sum structure $\Spc H'=\Spc H_{\Op A} \oplus \Spc N_{\phi}$ being preserved in the process. 

An important point is that the (semi-)norm $|f|_{\Op A}$ on $\Spc H_{\Op A}=\{f \in \Spc H': \V p(f)=\V 0\}$ is not dependent on the choice of biorthogonal system, in the sense that
\begin{align}
\forall f \in \Spc H_{\Op A}: |f|^2_{\Op A}=\langle \Op A_{\V \phi} f, f\rangle=  \langle \Op A f, f\rangle \mbox{ for any } \V \phi \subset \Spc H' \mbox{ s.t. } \Op R_{\V \phi}\Op R_{\V p}=\Identity.
\end{align}
This allows us to extend the result to general biorthogonal systems subject to the constraint $\phi_1, \dots, \phi_{N_0} \in \Spc H'$, the bottom line being that these systems are all topologically equivalent,
with the underlying space $\Spc H'$ (as a set) remaining the same.

The second part of the theorem is obtained by duality, based on the property that 
$\Spc H=(\Spc H')'=(\Spc H_\Op A \oplus \Spc N_{\V \phi})'=\Spc H_\Op A' \oplus \Spc N'_{\V \phi}$. There, $\Spc N'_{\V \phi}=\Spc N_{\V p}$ (see Theorem \ref{Theo:FiniteDimHilbert}) and $\Spc H_\Op A'$ is a bona fide Hilbert space that is orthogonal to $\Spc N'_{\V \phi}$ (and, hence, to $\V p$) and whose Riesz map  $\Op J_{\Spc H'_\Op A}$ is the inverse of $\Op J_{\Spc H_\Op A}=\Op A: \Spc H_\Op A \to \Spc H'_\Op A$.
} 
\end{proof}
Let us note that \eqref{Eq:BPinnerA} also directly yields the Riesz map $\Op J_{\Spc H'}=\Op A_{\V \phi} + \Op R_{\V p}:
\Spc H' \to \Spc H$, with the latter being the ``native'' space of our interest.

\section{Deterministic Theory: Generalized Splines}
\label{Sec:Deterministic}
The foundation of this theory is a concrete specification of native spaces based on an intermediate (quasi-invertible) operator $\Lop: \Spc H \to L_2$.
This then enables us to formulate a general representer theorem that yields the explicit parametric form of generalized splines.
As it turns out, these splines span the solution spaces of a class of inverse problems much broader than \eqref{Eq:GsplineInt}. We shall then briefly demonstrate how classic regularization theory, reproducing-kernel Hilbert spaces, and the kernel methods of machine learning can be retrieved as special cases of the proposed framework.
\subsection{Native Space Associated to an Operator $\Lop$}
\label{Sec:NativeSpaces}
\begin{definition}[Admissibility]
\label{Def:pAdmissibility}
An operator $\Lop: \Spc S \to L_2$ is said to be $\V p$-admissible if it satisfies the following conditions:
\begin{enumerate}
\item Quasi-invertibility: There exists an inverse operator $\Lop^{-1}$ such that $\Lop^{-1}\Lop \varphi=\Lop\Lop^{-1}\varphi=\varphi$ for all $\varphi \in \Spc S$.
\item Its Gram operator $\Op A=(\Lop^\ast\Lop)^{-1}: \Spc S \to \Spc S'$ is $\V p$-conditionally positive.
\item Null-space property: $\Spc N_{\V p}\subseteq \Spc N_\Lop\subset \Spc S'$, where $\Spc N_\Lop$ is the null space of $\Lop$ extended to the largest possible class of functions/distributions in $\Spc S'$.
\end{enumerate}
\end{definition}
An operator $\Lop$ that is $\V p$-admissible with $\V p=\{0\}$ (trivial null space) will be called {\em invertible} (as opposed to quasi-invertible)
because it admits an extension $\Lop: \Spc H=\overline{(\Spc S,\|\Lop \cdot\|_{L_2})} \to L_2$ that is an isometric bijection. This enables us to (almost trivially) identify its native space as $\Spc H=\Lop^{-1}(L_2)=\{f \in \Spc S': \|\Lop f\|_{L_2}<\infty\}$, which  simplifies the theory considerably.
Observe that for $\Spc S=\Spc S'=\R^N$ (finite dimensions), the family of $\V p$-admissible operators reduces to the invertible ones.

When the operator $\Lop$ is shift-invariant, its (quasi-)inverse and Gram operators in Definition \ref{Def:pAdmissibility} are given by $\Lop^{-1}: \varphi \mapsto \rho_\Lop \ast \varphi$ and $(\Lop^\ast\Lop)^{-1}: \varphi \mapsto \rho_{\Lop^\ast\Lop} \ast \varphi$, where $\rho_\Lop=\Lop^{-1}\{\delta\}$ and $\rho_{\Lop^\ast\Lop}=(\Lop^\ast\Lop)^{-1}\{\delta\}$
are the Green's function of $\Lop$ and $\Lop^\ast\Lop$, respectively. The canonical example of a $\V p$-admissible operator with a null space formed of polynomials is the $m$th-derivative operator $\Dop^m: \Spc S(\R) \to \Spc S(\R)$ with the relevant components being listed in Table \ref{Tab:Derivatives}.
\begin{table}\label{tab:convolution}

\begin{tabular}{p{0.6cm}p{2.4cm}p{2.6cm}p{0.5cm}p{4cm}}
 \hline\\[-2ex]
    $\Lop$ &$\rho_{\Lop}$ & $\rho_{\Lop^\ast\Lop}$ &$N_0$ & $\{(p_n,\phi_n)\}_{n=1}^{N_0}$\\[1ex]
    \hline\\[1ex]
    $\Dop$ & $\tfrac{1}{2}{\rm sign}(x)$ & $-\tfrac{1}{2}|x|$ & $1$ & $\left\{(p_1(x)=1,\phi_1=\delta\right)\}$ \\[2ex]
     $\Dop^m$ & $\tfrac{1}{2}{\rm sign}(x) \frac{x^{m-1}}{(m-1) !}$ & $\tfrac{(-1)^m}{2}\frac{|x|^{2m-1}}{(2m-1)!}$ & $m$ & $\left\{(\frac{x^{n-1}}{(n-1)!}\,,\delta^{(n-1)})\right\}$ \\   
\\
     $\Dop^\alpha$ & $\tfrac{1}{2}{\rm sign}(x) \frac{x^{\alpha-1}}{\Gamma(\alpha)}$ & $\tfrac{1}{2 \sin(\pi  \tfrac{2\alpha-1}{2})}\frac{|x|^{2\alpha-1}}{\Gamma(2\alpha)}$ & $\lfloor \alpha\rfloor$ & $\left\{(\frac{x^{n-1}}{(n-1)!}\,,\delta^{(n-1)})\right\}$\\[2ex]        \hline
\end{tabular} \caption{Differential operators encountered in spline theory with associated Green's functions
and biorthogonal systems.\label{Tab:Derivatives} }
\end{table}

\begin{theorem}[Factorization and $L_2$ isometries]
\label{Theo:FactoRiesz}
Let $\Spc H'=\Spc H_\Op A \oplus \Spc N_{\V \phi}$
be the Hilbert space in Theorem \ref{Theo:conditionPos}, with
the Gram operator now being given by $\Op A=(\Lop^\ast\Lop)^{-1}$ where $\Lop$ is $\V p$-admissible.
Then,
$\Op A_{\V \phi}=\Op G_{\V \phi}^\ast \Op G_{\V \phi}: \Spc H' \to \Spc H'_{\Op A}$ with $\Op G_{\V \phi}=\Op \Lop^{-1\ast}(\Identity-\Op R_{\V \phi}\Op R_{\V p}): \Spc H' \to L_2$, where the null space of $\Op G_{\V \phi}$ is $\Spc N_\V \phi$ and its restriction $\Op G_{\V \phi}|_{\Spc H_\Op A}=\Lop^{-1\ast}: \Spc H_{\Op A} \to L_2$
is a  bijective isometry.
Likewise, $\Op A^{-1}: \Spc H=\Spc H'_{\Op A} \oplus \Spc N_\V p \to \Spc H_{\Spc A}$ factorizes as $\Op A^{-1}=\Op L^\ast \Op L$ with 
$\Lop|_{\Spc H'_\Op A}= \Op G_\V \phi^{-1\ast}: \Spc H'_{\Op A} \to L_2$ being a bijective isometry and
$\Lop|_{\Spc N_\V p}=0$ (null-space property).  
\end{theorem}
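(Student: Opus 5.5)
The plan is to work first on the dense subspace $\Spc S$, where all operators are explicit and continuous, and then pass to $\Spc H'$ and $\Spc H$ by completion, reading off the isometries from the norms already fixed in Theorem \ref{Theo:conditionPos}. Throughout, write $\Op P_{\V\phi}=\Identity-\Op R_{\V\phi}\Op R_{\V p}$, which is the projector of $\Spc S$ onto $\Spc S_{\V p}$ along $\Spc N_{\V\phi}$. Its transpose is $\Op P_{\V\phi}^\ast=\Identity-\Op R_{\V p}\Op R_{\V\phi}$.

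The first step is the factorization of $\Op A_{\V\phi}$. Quasi-invertibility in Definition \ref{Def:pAdmissibility} gives $\Op A=(\Lop^\ast\Lop)^{-1}=\Lop^{-1}\Lop^{-1\ast}$ as a map $\Spc S\to\Spc S'$. Substituting this into \eqref{Eq:Apadmis} gives
\begin{align*}
\Op A_{\V\phi}=\Op P_{\V\phi}^\ast\Lop^{-1}\Lop^{-1\ast}\Op P_{\V\phi}=(\Lop^{-1\ast}\Op P_{\V\phi})^\ast(\Lop^{-1\ast}\Op P_{\V\phi})=\Op G_{\V\phi}^\ast\Op G_{\V\phi}.
\end{align*}
Pairing with $\varphi$ yields $\|\Op G_{\V\phi}\varphi\|_{L_2}^2=\langle\Op A_{\V\phi}\varphi,\varphi\rangle=|\varphi|_{\Op A}^2$. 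This is the seminorm of Theorem \ref{Theo:conditionPos}.

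It follows that $\Op G_{\V\phi}$ is continuous from $(\Spc S,\|\cdot\|_{\Spc H'})$ to $L_2$, so it extends to $\Spc H'$. It vanishes on $\Spc N_{\V\phi}$ because $\Op P_{\V\phi}$ does. On $\Spc S_{\V p}$ it equals $\Lop^{-1\ast}$ and is an isometry for the norm $|\cdot|_{\Op A}$, which is a genuine norm there by conditional positivity. Hence it extends isometrically to $\Spc H_{\Op A}=\overline{(\Spc S_{\V p},|\cdot|_{\Op A})}$. Injectivity on $\Spc H_{\Op A}$ then gives $\Spc N_{\Op G_{\V\phi}}=\Spc N_{\V\phi}$ exactly. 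The identity $\Op A_{\V\phi}=\Op G_{\V\phi}^\ast\Op G_{\V\phi}$ extends by continuity. Its range lies in $\Spc H'_{\Op A}$ because $\Op G_{\V\phi}^\ast$ maps $L_2$ into $\Spc H'$, and elements of the form $\Op P_{\V\phi}^\ast(\cdot)$ annihilate $\Spc N_{\V\phi}$.

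The main obstacle is surjectivity of $\Lop^{-1\ast}:\Spc H_{\Op A}\to L_2$. Since the range of an isometry is closed, it suffices to show the range is dense. I would argue by contradiction. Suppose $g\in L_2$ is orthogonal to $\Lop^{-1\ast}\varphi$ for all $\varphi\in\Spc S_{\V p}$. Then $\langle\Lop^{-1}g,\varphi\rangle=0$ for every $\varphi$ annihilated by $\V p$, so $\Lop^{-1}g\in\Spc N_{\V p}$ by the finite-codimension (Hahn--Banach) argument of Theorem \ref{Theo:FiniteDimHilbert}. Applying $\Lop$ and using $\Spc N_{\V p}\subseteq\Spc N_\Lop$ gives $g=\Lop\Lop^{-1}g=0$. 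This step needs the quasi-inverse identity to hold on $L_2$ and not only on $\Spc S$. I would obtain that from the $\Spc S$ identity by continuity and density of $\Spc S$ in $L_2$, interpreting $\Lop^{-1}g\in\Spc S'$ weakly. This is where the admissibility hypotheses are used in earnest.

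For the dual statement, take adjoints. Because $\Op G:=\Op G_{\V\phi}|_{\Spc H_{\Op A}}$ is a unitary map $\Spc H_{\Op A}\to L_2$, its inverse adjoint $\Op G^{-1\ast}:\Spc H'_{\Op A}\to L_2$ is also unitary, using the identification $L_2=L_2'$. On test elements it coincides with $\Lop$, since $(\Lop^{-1\ast})^{-1\ast}=\Lop$. We then define $\Lop$ on $\Spc H=\Spc H'_{\Op A}\oplus\Spc N_{\V p}$ by this isometry on the first summand and by $0$ on $\Spc N_{\V p}$, which is legitimate by the null-space property. Finally, $\Lop^\ast\Lop$ vanishes on $\Spc N_{\V p}$, and on $\Spc H'_{\Op A}$ it equals $(\Op G^\ast\Op G)^{-1}=\Op A_{\V\phi}^{-1}$. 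This matches exactly the operator $\Op A^{-1}$ characterized in Theorem \ref{Theo:conditionPos}, so $\Op A^{-1}=\Lop^\ast\Lop$.
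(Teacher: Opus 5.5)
The gap is in the surjectivity step, which is the heart of the theorem. Your contradiction argument needs two things for an arbitrary $g\in L_2$: that $\Lop^{-1}g$ is a genuine element of $\Spc S'$, and that $\Lop\Lop^{-1}g=g$. You propose to get both from the identity on $\Spc S$ ``by continuity and density.'' That extension is not available in the quasi-invertible case the theorem is about, because $\Lop^{-1}$ is in general not continuous from $(\Spc S,\|\cdot\|_{L_2})$ into $\Spc S'$.

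Take $\Lop=\Dop$. Then $\Lop^{-1\ast}\varphi=-\tfrac12{\rm sign}\ast\varphi$ tends to the constants $\mp\tfrac12\int\varphi$ at $\pm\infty$, so it lies in $L_2$ only when $\langle p_1,\varphi\rangle=\int\varphi=0$. Your weak definition $\langle\Lop^{-1}g,\varphi\rangle=\langle g,\Lop^{-1\ast}\varphi\rangle$ therefore makes sense only for $\varphi\in\Spc S_{\V p}$. For $g(x)=(1+x)^{-1}$ on $x>0$ (zero elsewhere), $\tfrac12{\rm sign}\ast g$ does not exist as a distribution at all.

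What you actually have is a functional on $\Spc S_{\V p}$ that your hypothesis makes zero. The statement ``$\Lop^{-1}g\in\Spc N_{\V p}$'' then only says that its extensions to $\Spc S$ lie in $\Spc N_{\V p}$. The identity $\Lop\Lop^{-1}g=g$ that you invoke to conclude $g=0$ is exactly what is in question. The fact that $\Lop^{-1}$ itself cannot be extended to $L_2$ is the very reason the boundary correction $\Identity-\Op R_{\V p}\Op R_{\V\phi}$ enters the theorem.

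The fix is to extend the corrected inverse instead. The operator $\Op G^\ast_{\V\phi}=(\Identity-\Op R_{\V p}\Op R_{\V\phi})\Lop^{-1}$ is automatically bounded from $L_2$ into $\Spc H$, being the adjoint of the bounded $\Op G_{\V\phi}:\Spc H'\to L_2$. (You wrote $\Spc H'$ there; it should be $\Spc H$.) Your orthogonality hypothesis is equivalent to $\Op G^\ast_{\V\phi}g=0$, since $\Op G_{\V\phi}$ kills $\Spc N_{\V\phi}$ and $\Spc S=\Spc S_{\V p}\oplus\Spc N_{\V\phi}$ is dense in $\Spc H'$. So what you need is $\Lop\Op G^\ast_{\V\phi}=\Identity$ on $L_2$. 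On $\Spc S$ this reads $\Lop\Op G^\ast_{\V\phi}\varphi=\Lop\Lop^{-1}\varphi-\sum_{n}\langle\phi_n,\Lop^{-1}\varphi\rangle\,\Lop p_n=\varphi$, by admissibility conditions 1 and 3. It then passes to $L_2$ by continuity of $\Op G^\ast_{\V\phi}$ and of $\Lop$ on $\Spc H$ (BLT).

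This is the paper's argument, run in the opposite order from yours. The paper shows that $\Op G^\ast_{\V\phi}$ is a right inverse of the isometry $\Lop:\Spc H'_{\Op A}\to L_2$, so $\Lop$ is onto. Only then does it deduce bijectivity of $\Op G_{\V\phi}|_{\Spc H_{\Op A}}$ by duality.

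If you want to keep your order, apply $\Lop$ on the test-function side. For $\psi\in\Spc S$, the null-space property gives $\V p(\Lop^\ast\psi)=\big(\langle\Lop p_n,\psi\rangle\big)_n=\V 0$, and quasi-invertibility gives $\Lop^{-1\ast}\Lop^\ast\psi=\psi$. Hence $\langle g,\psi\rangle=\langle g,\Lop^{-1\ast}\Lop^\ast\psi\rangle=0$ whenever $\Lop^\ast\psi\in\Spc H_{\Op A}$, which is immediate if $\Lop^\ast(\Spc S)\subset\Spc S$, as for $\Dop^m$. Density of $\Spc S$ in $L_2$ then gives $g=0$.

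The rest of your plan is sound: the factorization of $\Op A_{\V\phi}$, the identification of the kernel of $\Op G_{\V\phi}$, and the construction of $\Lop$ on $\Spc H$ as the inverse adjoint of a unitary map.
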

\begin{proof} [\small Proof]
{\small By virtue of the factorization of $\Op A_\V \phi$, we can now specify the primary seminorm as
$|f|_{\Op A}=\|\Op G_\V \phi f\|_{L_2}$ with the null-space property of $\Op G_\V \phi$ following from the observation that $(\Identity-\Op R_{\V \phi}\Op R_{\V p})\phi=0$ for all $\phi \in \Spc N_{\V \phi}$.
This shows that the restriction of $\Op G_\V \phi: \Spc H' \to L_2$ to
$\Spc H_{\Op A}$ is an isometry, a fact that can also be stated as $\Op G_\V \phi|_{\Spc H_\Op A}=\Lop^{-1\ast}: \Spc H_{\Op A} \to L_2$. 

A similar picture emerges for the predual space with $|f|'_{\Op A}=\|\Op L f\|_{L_2}$ so that
$\Lop: \Spc H=\Spc H'_\Op A \oplus \Spc N_{\V p}\to L_2$, with $\|\Lop\|=1$ and an exact preservation of the input norm for $\Lop: \Spc H'_A\to L_2 $ (isometry).
To show that the latter map is invertible (bijection), we identify $\Op G_\V \phi^\ast=(\Identity-\Op R_{\V p}\Op R_{\V \phi})\Op \Lop^{-1}$ and verify that
\begin{align}
\Lop \Op G_\V \phi^\ast \varphi=\Lop\Lop^{-1}\varphi=\varphi  \quad \mbox{ for all }  \varphi \in \Spc S,	
\end{align}
as consequence of the admissibility conditions 1 and 3, with the latter yielding $\Lop(\Op R_{\V p}\Op R_{\V \phi})\varphi=0$. This tells us that the isometric map $\Op G_\V \phi^\ast: (\Spc S, \|\cdot\|_{\Spc H'_{\Op A}}) \to (\Spc S, \|\cdot\|_2)$ is invertible with $\Lop\Op G_\V \phi^\ast=\Identity$. By applying the continuous linear extension (BLT) theorem \cite{Reed1975} twice, we then get that $\Op G_\V \phi^\ast: \overline{(\Spc S, \|\cdot\|_{\Spc H'_{\Op A}})}=\Spc H'_{\Op A} \to L_2$ with $\|\Op G_\V \phi^\ast\|=1$ 
and $\Op L: \overline{(\Spc S, \|\cdot\|_2)}=L_2 \to \Spc H'_{\Op A}$ with $\|\Op L\|=1$, which
proves that $\Spc H'_{\Op A}=\Op G_\V \phi^\ast(L_2)$ and $L_2=\Op L(\Spc H_\Op A')$ are isomorphic. 
This then enables us to identify the corresponding Riesz map as
\begin{align}
\Op J_{\Spc H}=\Lop^\ast \Lop + \Op R_{\V p}: \Spc H \to \Spc H'.
\end{align}
Finally, by duality, we get that $L_2=\Op G_\V \phi(\Spc H_{\Op A})$ and $\Spc H_\Op A=\Lop^\ast(L_2)$.
} 
\end{proof}
Theorem \ref{Theo:FactoRiesz} rigorously specifies the {\em native Hilbert space} $\Spc H=\Spc H_\Lop$ associated with a $\V p$-admissible operator $\Lop$.
The concrete transcription of the result is that any $f \in \Spc H_\Lop$ has a unique decomposition
$f=\Op G_\V \phi^\ast w + p$, 
$w=\Lop f$, $p=\Op R_{\V p}\Op R_{\V \phi}f$ so that
\begin{align}
\label{Eq:NormNative}\|f\|^2_{\Spc H_\Lop}=\|\Lop f\|^2_{L_2}+\sum_{n=1}^{N_0} |\langle \phi_n,f\rangle|^2=\|w\|^2_{L_2}+|p|^2_{\V \phi}.
\end{align}
The admissibility of $\Lop$ ensures that $\Lop^\ast\circ \Lop: \Spc S \to L_2 \to \Spc S'$, which implies that $\Spc S \embedD \Spc H_\Lop \embedD \Spc S'$, the embedding being dense and continuous. This, together with \eqref{Eq:NormNative}, then yields the concise description 
$\Spc H_\Lop=\overline{(\Spc S,\|\cdot\|_{\Spc H_\Lop})}$.

By using the property that $\Spc H_\Lop$ (as a set) is independent of the choice of biorthogonal system $(\V p, \V \phi)$, we can show that the adjoint operator $\Op G^\ast_{\V \phi}:
L_2 \to \Spc H_\Lop$ is the unique right-inverse of $\Lop$ that imposes the boundary condition $\V \phi(f)=\V 0$. To remind us of these properties, we shall for now on denote $\Op G^\ast_{\V \phi}$ by $\Lop^{{-1}}_{\V \phi}$ with the definition of this operator being
\begin{align}
\label{Eq:gInverse}
\Op L^{-1}_{\V \phi}=(\Identity-\Op R_{\V \phi}\Op R_{\V p})\Lop^{-1}: L_2 \to \Spc H_\Lop.
\end{align}
This also suggest an alternative definition of $\Op L^{-1}_{\V \phi}\{w\}$
as the unique solution in $\Spc H_\Lop$ of the linear differential equation
\begin{align}
\Lop f=w\quad \mbox{s.t.}\quad \V \phi(f)=\V 0
\label{Eq:PDE}
\end{align}
with $w\in L_2$. 
\subsection{Generalized Variational Splines}
Our variational splines are specified as solutions of inverse problems where a function (or a distribution) $f \in \Spc H_\Lop$ 
is probed through a set of linear measurement functionals $\V \nu=(\nu_1, \dots,\nu_M)$.
Since the recovery of $f$ from such measurements is generally ill-posed, one seeks the ``most regular'' solution $f_{\rm spline}\in \Spc H_\Lop$ that fits the data, with our notion of 
regularity being tied to an admissible operator $\Lop$.
In the noise-free scenario, this is formulated as $f_{0}=\arg \min_{f \in \Spc H_\Lop} \|\Lop f\|^2_{L_2}$ s.t. $\M y=\V \nu(f)$.
By using a $\V p$-admissible operator, one privileges solutions that are in the null space $\Spc N_{\V p}$ of $\Lop$, which is one of the important specificities of splines.
When the measurements are corrupted by noise, one adapts the methodology by approximating the data within the tolerance of the noise, rather than fitting them exactly. In practice, this is done by reformulating the task as the minimization of a combined loss that is the sum of a data-fidelity term 
and a regularizing ``spline energy.'' 

Our next result shows that the solutions of a broad class of such problems all have the same parametric form $\eqref{Eq:f0lin2}$: a generalized spline that lives in a finite-dimensional reconstruction space whose basis functions
solely depend on $(\Lop, \V \nu)$.
We obtain it as a special case of \cite[Theorem 3]{Unser2022} by exploiting the explicit direct-sum structure of the native space $\Spc H_\Lop$ in Theorem \ref{Theo:conditionPos}.

\begin{theorem}[Representer theorem for generalized variational splines]
\label{Theo:GeneralRepSemiHilbert}
We consider the following setting.
\begin{itemize}
\item A regularization operator $\Lop: \Spc S \to L_2$ that is $\V p=(p_0,\dots,p_{N_0})$-admissible with $\Op A=(\Lop^\ast\Lop)^{-1}: \Spc S \to \Spc S'$.
\item The native space $\Spc H_\Lop=\overline{(\Spc S,\|\cdot\|_{\Spc H_\Lop})}$ equipped with the norm \eqref{Eq:NormNative} and its continuous dual $\Spc H_\Lop'=\Spc H_{\Op A} \oplus \Spc N_{\V \phi}$.
\item A linear measurement operator 
$\V \nu: \Spc H_\Lop \to \R^M$ with $M>N_0$. 

\item A  loss functional $E: \R^M \times \R^M \to \R_{\ge0}\cup \{+\infty\}$ that is lower-semicontinuous, coercive, 
and convex in its second argument.
\item Some fixed regularization parameter $\lambda \in \R_{\ge0}$. 
\end{itemize}
To ensure that \eqref{Eq:GenericOptimizationProb2} is well-posed, we also require that the vectors $\M v_1,\dots,\M v_{N_0} \in\R^M$ with 
$[\M v_n]_m=\langle \nu_m,p_n \rangle$ be {\em linearly independent}, which implies the existence of
a complementary set $\{\M u_1, \dots, \M u_{M-N_0}\}$ in $\R^M$ such that $\R^M={\rm span} \{\M v_n\}_{n=1}^{N_0}  \oplus {\rm span} \{\M u_m\}_{m=1}^{M-N_0}$.

Then, for any fixed $\M y\inR^M$,  the solution of the optimization problem
\begin{align}
\label{Eq:GenericOptimizationProb2}
\min_{f \in \Spc H_\Lop} \left( E\big(\V y, \V \nu(f)\big)+ \lambda \|\Lop f\|^2_{L_2}\right)
\end{align}
is unique and admits the linear parameterization 
\begin{align}
\label{Eq:f0lin2}
f_\lambda=\sum_{m=1}^{M-N_0} a_m \widetilde{\psi}_m +\sum_{n=1}^{N_0} b_{n} p_n,
\end{align}
with coefficients $(\V a,\V b)\inR^{M}$ and 
fixed basis functions $p_n\in \Spc N_\V p$, $\widetilde{\psi}_m=\Op A\{\widetilde{\nu}_m\}\in \Spc H_{\Lop}$, where
$\Op A=(\Lop^\ast\Lop)^{-1}$ is the Riesz map $\Spc H_{\Op A} \to \Spc H'_{\Op A} $ 
and  $\widetilde{\nu}_m=\widetilde{\M u}_m^\Tr\V \nu \in \Spc H_\Op A$. Here, $\widetilde{\M u}_m \in\R^M$ is the unique (biorthogonal) vector such that $\widetilde{\M u}^\Tr_m\M v_n=0$ and $\widetilde{\M u}^\Tr_m\M u_{m'}=\delta_{m,m'}$
for any $m,m' \in \{1,\dots,M-N_0\}$ and $n\in \{1,\dots,N_0\}$.


\end{theorem}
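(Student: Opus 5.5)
The plan is to reduce the problem to a finite-dimensional one by orthogonal projection in $\Spc H_\Lop=\Spc H'_{\Op A}\oplus\Spc N_{\V p}$, and then to handle existence and uniqueness separately.

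First, I change the biorthogonal system so that it is adapted to the measurements. By the linear independence of $\M v_1,\dots,\M v_{N_0}$, I can choose $\V\phi=(\phi_n)$ as linear combinations of the $\nu_m$ with $\langle\phi_n,p_{n'}\rangle=\delta_{n-n'}$. This is allowed because Theorem \ref{Theo:conditionPos} says the native space, as a set, does not depend on the choice of biorthogonal system, provided $\phi_n\in\Spc H'_\Lop$. With this choice, $\Spc N_{\V\nu}={\rm span}\{\widetilde\nu_m\}\oplus\Spc N_{\V\phi}$. The conditions $\widetilde{\M u}_m^\Tr\M v_n=0$ give $\V p(\widetilde\nu_m)=\V 0$, so $\widetilde\nu_m\in\Spc H_{\Op A}$. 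The norm \eqref{Eq:NormNative} then splits as $\|\Lop f\|^2+|p|^2_{\V\phi}$. For any $f=g+p$ with $g=\Lop^{-1}_{\V\phi}\Lop f\in\Spc H'_{\Op A}$, the measurements are determined by the pairings $\langle\widetilde\nu_m,g\rangle$ together with $\V\nu(p)$. Here I use $\langle\widetilde\nu_m,p\rangle=0$ and the fact that $\langle\phi_n,g\rangle=0$, which holds because $\V\phi(g)=\V 0$.

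Next comes the projection argument. For fixed $f$, let $g_0$ be the orthogonal projection of $g$, in $\Spc H'_{\Op A}$ with inner product $\langle\Lop\cdot,\Lop\cdot\rangle$, onto ${\rm span}\{\Op A\widetilde\nu_m\}$. This is where $\Op A=\Op J_{\Spc H_{\Op A}}^{-1}$ is used: it is the Riesz map, so $\langle\widetilde\nu_m,g\rangle=\langle\Op A\widetilde\nu_m,g\rangle_{\Spc H'_{\Op A}}$. Consequently $g-g_0$ is annihilated by every $\widetilde\nu_m$, and also by every $\phi_n$. Hence $\V\nu(g_0+p)=\V\nu(f)$ while $\|\Lop(g_0+p)\|\le\|\Lop f\|$, with equality only if $g=g_0$. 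So for $\lambda>0$ every minimizer has the form \eqref{Eq:f0lin2}. For $\lambda=0$, or when minimizers are not unique, I would instead state the conclusion for the minimal-norm solution within the (convex) solution set.

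It then remains to show existence and uniqueness for the finite-dimensional problem in the coefficients $(\V a,\V b)$. The map $(\V a,\V b)\mapsto\V\nu(f)$ is a bijection onto $\R^M$. The $\V b$-part produces $\sum b_n\M v_n$. The $\V a$-part produces a component along the $\M u$ directions through the Gram matrix $[\langle\widetilde\nu_m,\Op A\widetilde\nu_{m'}\rangle]$, which is positive definite because $\Op A$ is $\V p$-conditionally positive-definite and the $\widetilde\nu_m$ are linearly independent and lie in $\Spc H_{\Op A}$. The cost is convex, lower semicontinuous and coercive: coercivity in $\V b$ and in $\V a$ comes from $E$ composed with this bijection, plus the strictly convex term $\lambda\V a^\Tr\M G\V a$. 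This gives existence. Uniqueness requires strict convexity. In the $\V a$ direction it comes from the regularizer. In the $\V b$ direction it does not come for free, and this is the main obstacle. My plan is to follow \cite[Theorem 3]{Unser2022}: the set of minimizers is convex and all minimizers share the same $\Lop f$, hence the same $\V a$; uniqueness of $\V b$ then has to be taken from strict convexity of $E$ in its second argument restricted to ${\rm span}\{\M v_n\}$. Where the hypotheses are weaker than that, I would invoke the cited theorem for its precise formulation.
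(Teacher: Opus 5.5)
Your argument is sound, and it is more explicit than the paper's. The paper gives no proof beyond declaring the result a special case of \cite[Theorem 3]{Unser2022} once the direct-sum structure $\Spc H_\Lop=\Spc H'_{\Op A}\oplus\Spc N_{\V p}$ is available. You add two devices. The first is re-choosing the boundary functionals inside $\Spc N_{\V\nu}$ so that $\Spc N_{\V\nu}={\rm span}\{\widetilde\nu_m\}\oplus\Spc N_{\V\phi}$; this is legitimate, since neither $\Spc H_\Lop$ nor $\|\Lop\cdot\|_{L_2}$ depends on $\V\phi$. The second is the Pythagoras argument in $\Spc H'_{\Op A}$. Together they make the reduction to the coefficients $(\V a,\V b)$ self-contained. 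The block-triangular structure of $(\V a,\V b)\mapsto\V\nu(f)$, with diagonal blocks $\M G$ and the identity, is then what yields coercivity and existence. The paper's route is shorter and inherits the generality of the cited abstract theorem, but it hides where each hypothesis enters; yours makes this visible.

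One correction. The Riesz representer of $\widetilde\nu_m$ in $\Spc H'_{\Op A}$ is $\Op A_{\V\phi}\widetilde\nu_m=\Op A\widetilde\nu_m-\sum_{n}p_n\langle\phi_n,\Op A\widetilde\nu_m\rangle$, and this map is $\Op J_{\Spc H_{\Op A}}$, not its inverse. By contrast, $\Op A\widetilde\nu_m$ itself generally violates $\V\phi(\cdot)=\V 0$, so it is not in $\Spc H'_{\Op A}$. Project onto ${\rm span}\{\Op A_{\V\phi}\widetilde\nu_m\}$ instead, and absorb the $\Spc N_{\V p}$ discrepancy into $\V b$.

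On uniqueness your caution is justified, but do not delegate the missing piece to the citation. Under the stated hypotheses the claim is false, so no reference can supply it.

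For $\lambda=0$, the objective depends on $f$ only through $\V\nu(f)$. Any minimizer can therefore be perturbed by an element of $\ker\V\nu$, which is nontrivial as soon as $\dim\Spc H_\Lop>M$.

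For $\lambda>0$, take $\Lop=\Dop$, $\V p=(1)$, $\nu_m=\delta_{x_m}$ with $(x_1,x_2)=(0,1)$, $\M y=(0,1)$ and $\lambda=1$. Use the convex, continuous, coercive loss $E(\M y,\M z)=|y_1-z_1|+|y_2-z_2|$, so the objective is $J(f)=|f(0)|+|1-f(1)|+\|f'\|^2_{L_2}$. Set $s=f(0)$, $t=f(1)$ and $d=t-s$. Cauchy--Schwarz and the triangle inequality give $J(f)\ge|s|+|1-t|+d^2\ge|1-d|+d^2\ge\tfrac34$. Equality holds if and only if $d=\tfrac12$, $s\in[0,\tfrac12]$, and $f$ is affine on $[0,1]$ and constant outside. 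This is a whole segment of minimizers of the form \eqref{Eq:f0lin2}, sharing $a_1$ but not $b_1$.

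Your argument actually establishes the correct statement:
\begin{itemize}
\item existence of a minimizer;
\item for $\lambda>0$, the form \eqref{Eq:f0lin2} of every minimizer;
\item for $\lambda>0$, uniqueness of $\Lop f_\lambda$ and hence of $\V a$;
\item uniqueness of $\V b$ exactly when $\V b\mapsto E\big(\M y,\M z_0+\sum_n b_n\M v_n\big)$, with $\M z_0=\V\nu\big(\sum_m a_m\widetilde\psi_m\big)$, has a unique minimizer (for instance, when $E(\M y,\cdot)$ is strictly convex).
\end{itemize}
State that hypothesis, together with $\lambda>0$, explicitly.
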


One obtains an exact fit, as in \eqref{Eq:GsplineInt}, either by letting $\lambda\to 0$ or by selecting $E$ to be a barrier function that returns
$0$ if $\V \nu(f)=\M y$ and $+\infty$ otherwise. At the other end of the scale, for $\lambda\to \infty$, one gets a linear regression with $f_\infty \in \Spc N_{\V p}$.

We like to point out that the parametric form \eqref{Eq:f0lin2} in Theorem \ref{Theo:GeneralRepSemiHilbert} differs from the traditional representation of such variational splines. The standard form is\begin{align}
\label{Eq:f0lin3}
f_\lambda=\sum_{m=1}^{M} \tilde a_m \psi_m +\sum_{n=1}^{N_0} \tilde b_{n} p_n,
\end{align}
with $\psi_m=\Op A\{\nu_m\}$, which requires the specification of additional ``orthogonality'' conditions between the expansion coefficients $\tilde{\M a}=(\tilde a_m)$ and $\tilde{\M b}=(\tilde b_{n})$ not given here (see \cite{Wahba1990} for details). While the two forms are equivalent, they each have their specificities.
The conventional form \eqref{Eq:f0lin3} has the advantage of not requiring the choice of any boundary functionals $\V \phi$. The fact that it is overparameterized is dealt with by enforcing the ``orthogonality'' of the two components, which results in an additional set of $N_0$ linear equations. The aspect that is problematic in an abstract theory is that there is no guarantee that
the ``augmented'' basis functions $\psi_m=\Op A\{\nu_m\}$ are well-defined, because, unlike the $\widetilde{\psi}_m$ in Theorem \ref{Theo:GeneralRepSemiHilbert}, they are typically not included in $\Spc H_\Lop$.
Eq.~\eqref{Eq:f0lin2}, by contrast, has the advantage that the linear parametrization of the spline is explicit and foolproof, but it requires more work to construct the reduced basis functions, which can be linked to a specific $\V \phi$. While this may look as an unnecessary complication, we shall discover that it is actually the key for making the connection with a corresponding class of generalized Gaussian processes, which would otherwise be ill-defined.

\subsection{Tikhonov Regularization and Ridge Regression}
\label{Sec:Tikhonov}
Remarkably, the infinite-dimensional optimization problem \eqref{Eq:GenericOptimizationProb2}  can be solved in closed form whenever the loss $E$ is quadratic, and this irrespective of the actual choice of Hilbert spaces. The same holds true for \eqref{Eq:GsplineInt}.
As demonstration of usage, we set $E\big(\M y, \V \nu(f)\big)=\sum_{m=1}^M |y_m -\langle \nu_m, f\rangle|^2$ and assume (for simplicity) that
the null space of the regularization functional is trivial ($N_0=0$). By plugging the solution \eqref{Eq:f0lin2} into \eqref{Eq:GenericOptimizationProb2} and by defining the system matrix
$\M G \in \R^{M \times M}$ with entry $[\M G]_{m,n}=\langle \nu_m,\psi_n \rangle$, we then recast the problem as
\begin{align*}
\min_{\M a \in  \R^M} \left(\|\M y - \M G \M a\|^2 + \lambda \|\sum_{m=1}^M a_m \psi_m\|^2_{\Spc H} \right)
=\min_{\M a \in  \R^M} \left(\|\M y - \M G \M a\|^2 + \lambda \M a^\Tr \M G \M a\right).
\end{align*} 
The key here is Item 3 in Theorem \ref{Def:DualMap}
 with $\nu=\nu_m$ and $f=\psi_n = \nu_n^\ast$,
which gives $\langle \nu_m, \psi_n\rangle=\langle \nu^\ast_m, \psi_n\rangle_{\Spc H}=\langle \psi_m, \psi_n\rangle_{\Spc H}$
and makes  the underlying system matrix equal to the
Gram matrix of the basis $\{\psi_m\}$. This then yields the solution
\begin{align}
\label{Eq:TikAbstract}
f_\lambda= \sum_{m=1}^M a_m \psi_m\quad  \mbox{ with }\quad \M a=(\M G \M G + \lambda \M G)^{-1} \M G \M y=(\M G + \lambda \M I)^{-1} \M y,
\end{align}
where we have made use of the invertibility of $\M G$. The latter, which is equivalent
to the linear independence of the $\psi_m$, is inherited from the linear independence of the $\nu_m$ through the Riesz pairing.

\subsection{Link with RKHS and Conventional Splines}
We can make the link with reproducing-kernel Hilbert spaces when $\Spc H$ is a Hilbert space of continuous functions on $\R^d$
on which the sampling functionals are well-defined, with $\nu_m=\delta_{\V x_m} \in \Spc H'$ for any $\V x_m \in \R^d$.
In that setting with $\Spc N_{\V p}=\{0\}$, the {\em reproducing kernel} $h: \R^d \times \R^d \to \R$ is the Schwartz kernel (a.k.a.\ generalized impulse response) of the symmetric, positive-definite operator $\Op A$ in Theorem \ref{Theo:conditionPos}, whose action is described explicitly by the integral $\Op A\{\varphi\}(\V x)=\int_{\R^d} h(\V x,\V y) \varphi(\V y) \dint \V y$. 

If we now consider the corresponding interpolation/approximation problem \eqref{Eq:GenericOptimizationProb2} with $\langle\nu_m, f\rangle=\langle \delta_{\V x_m}, f\rangle=f(\V x_m)$
at distinct locations $\V x_1, \cdots,\V x_M \in \R^d$, we find that the basis functions in \eqref{Eq:f0lin2} are given by
$\psi_m=\Op A\{\delta_{\V x_m}\}=h(\cdot,\V x_m)$.
This yields $f_\lambda(\V x)=\sum_{m=1}^M a_m h(\V x,\V x_m)$, which has the form of the kernel estimators used in machine learning.

As for the case where $\Op A$ is shift-invariant with frequency response $\widehat a(\bw)$, we note that the abstract definition of positive-definiteness in Definition \ref{Def:PDFunctional} below
with $\varphi_m=\delta_{\V x_m} \in \Spc H'$
reverts to the definition of a positive-definite function (in the sense of Bochner), which is fundamental to the Fourier-based formulation of probability theory. This notion is also central to interpolation theory: a shift-invariant kernel $h(\V x,\V y)=a(\V x- \V y)$ is the reproducing kernel of a Hilbert space if $\widehat a(\bw)$ (the Fourier transform of $a: \R^d\to \R$) is a (strictly) positive function.

We can also make the link with polynomial-spline interpolation by setting $\Spc S=\Spc S(\R)$, $\Lop=\Dop^N$, and 
$\nu_m=\delta_{x_m} \in \Spc H'_{\Dop^N}$. By making use of the biorthogonal system and Green's function listed in Table \ref{Tab:Derivatives}, one readily shows by regrouping terms that \eqref{Eq:f0lin2} then takes the form
\begin{align}
f_\lambda(x)=\sum_{1=1}^M \tilde{a}_m |x-x_m|^{2N-1} +
 \sum_{n=1}^{N-1} \tilde{b}_n x^{n-1},
\end{align}
which is the classic formula of a polynomial spline of degree $(2N-1)$ with knots at the $x_m$.

\section{Stochastic Theory: The Gaussian Connection}
\label{Sec:GGP}

\subsection{Abstract Generalized Gaussian Processes}
The definition of abstract generalized Gaussian processes is guided by two important observations: the first is that any linear functional of a Gaussian
random vector is a scalar Gaussian random variable. The second is that the characteristic function of the Gaussian random vector $\VRand X\sim\Spc N(\V \mu_{\VRand X},\M C_{\VRand X})$ with mean vector $\V \mu_{\VRand X}\in \R^N$ and covariance matrix  $\M C_{\VRand X}\in \R^{N \times N}$ is given 
by 
\begin{align}
\label{Eq:MultiGaussFourier}
\hat p_{\VRand X}(\V \xi)= \Exp\{\ee^{\jj \langle \V \xi, \M x\rangle}\}=\exp\left( \jj \langle \V \mu_{\VRand X},\V \xi\rangle - \tfrac{1}{2} \langle \M C_{\VRand X}\V \xi,\V \xi\rangle \right).
\end{align}
We recall that the latter is the (conjugate) Fourier transform of the probability density function (pdf) 
$p_{\VRand X}$ of the random vector $\VRand X$. 

Gaussian stochastic processes, where the objects of interest are random functions, are the classic infinite-dimensional extensions of Gaussian random variables.
To fully exploit the mathematical framework of Section 1.2, we shall take one more step of abstraction by considering generalized Gaussian processes, which will be defined as abstract random linear functionals acting on some nuclear space $\Spc S$. 
The proposed framework is general enough to cover all Gaussian random objects, including scalar variables (with $\Spc S=\R$), vectors (with $\Spc S=\R^N$), discrete signals with $\Spc S=\Spc S(\Z)$, and (multidimensional) generalized functions with $\Spc S=\Spc S(\R^d)$ or $\Spc S=\Spc D(\Omega)$ with $\Omega \subseteq \R^d$. 

To explain the concept, we recall that an element
$g\in \Spc S'$ (the dual space of $\Spc S$) is a continuous linear functional $g: \Spc S \to \R$ that maps $\varphi \mapsto g(\varphi)=\langle g, \varphi\rangle \in \R$. We can therefore view a generalized Gaussian process $\gRand g$ as a random generation mechanism that produces such $g$ (the so-called {\em realizations} of $\gRand g$) so that $\gRand g(\varphi)=\langle G, \varphi\rangle$ with $\varphi$ fixed yields a scalar Gaussian random variable with a predictable mean and variance. Such a generalized process can be characterized mathematically by a Gaussian probability measure on $\Spc S'$ or, equivalently, by its characteristic functional on $\Spc S$---the infinite-dimensional counterpart of \eqref{Eq:MultiGaussFourier}.
\begin{definition}
\label{Theo:GGauss}
A {\em generalized Gaussian process} $\{\gRand g(\varphi)=\langle \gRand g,\varphi \rangle : \varphi \in \Spc S \}$ in $\Spc S'$ is a collection of real-valued random variables indexed by $\Spc S$ that has the following properties.
\begin{enumerate}
    \item Linearity: For any \( \varphi_1, \varphi_2 \in \Spc S \) and scalars \( a_1, a_2 \in \R \),
\begin{align*}
    \gRand g(a_1 \varphi_1 + a_2 \varphi_2) = a_1 \gRand g(\varphi_1) + a_2 \gRand g(\varphi_2).
\end{align*}
\item Continuity: For any converging sequence $(\varphi_n)_{n \in \N}$ in $\Spc S$, we have that
\begin{align*}
    \lim_{n \to \infty}\gRand G(\varphi_n) =\gRand G(\lim_{n \to \infty}\varphi_n).
\end{align*}

    \item Gaussianity:
    For each finite set \( \{\varphi_1, \dots, \varphi_N\} \subset \Spc S \), the random vector $\VRand Y=\big(\gRand g(\varphi_1), \dots, \gRand g(\varphi_N)\big)$ has a multivariate Gaussian distribution with the underlying finite-dimensional distributions all being mutually compatible.
\end{enumerate}
\end{definition}

As in the finite-dimensional setting, a generalized Gaussian process (GGP) is characterized by its first and second-order moments, with the difference that the mean $\mu=\Exp\{ \gRand g\}\in \Spc S'$
is now a continuous linear functional on $\Spc S$, while the ``covariance matrix'' takes the form of the symmetric positive-definite operator $\Op A: \Spc S \to \Spc S'$. These enable the determination
of the means and covariances of $\gRand g$ according to the rules
\begin{align*}
\mu(\varphi)&=\Exp\{\gRand g(\varphi)\}=\langle \mu,\varphi\rangle \\
{\rm Cov}_{\gRand G}(\varphi_1,\varphi_2\big)&=\Exp\Big\{\big(\gRand g(\varphi_1)-\mu(\varphi_1)\big)\big(\gRand g(\varphi_2)-\mu(\varphi_2)\big)\Big\}=\langle \Op A \varphi_1,\varphi_2 \rangle.
\end{align*}
The corresponding generalized Gaussian process is denoted $\gRand g \sim \Spc N(\mu,\Op A)$, in direct analogy with the finite-dimensional convention. 

By adapting Gelfand's proof for $\Spc S=\Spc D(\R^d)$ \cite{Gelfand-Villenkin1964}, one can guarantee the existence of such a generalized Gaussian process for any $\mu \in \Spc S'$ and symmetric positive-definite operator $\Op A$. In particular, the generalized Gaussian process $\gRand W \sim \Spc N(0,\Identity)$ is a called a {\em white noise} or {\em Gaussian innovation}.
\begin{theorem}[Generalized Gaussian process in $\Spc S'$]
\label{Theo:GGaussProcess}
For any given $\mu \in \Spc S'$ and
symmetric positive-definite covariance operator $\Op A: \Spc S \to \Spc S'$,
the generalized Gaussian process $\gRand g \sim \Spc N(\mu,\Op A)$ is a random continuous linear functional on $\Spc S$. It is fully specified by its characteristic functional
\begin{align}
\label{Eq:CharacGauss}
\Form_{\gRand g}(\varphi)=
\Exp\Big\{\ee^{ \jj \gRand g(\varphi)}\Big\} = \exp\left( \jj \langle \mu,\varphi\rangle - \tfrac{1}{2} \langle \Op A \varphi, \varphi\rangle \right).
\end{align}
This process induces mutually compatible families of random Gaussian vectors $\VRand Y={\big(\gRand g(\varphi_n), \dots,\gRand g(\varphi_N)\big)} \sim \Spc N(\V \mu_{\VRand Y},\M C_{\VRand Y})$ whose mean vectors $\V \mu_{\VRand Y}\in \R^N$ and covariance matrices $\M C_{\VRand Y} \in \R^{N \times N}$ are given by
\begin{align}
[\V \mu_{\VRand Y}]_n=\langle \mu,\varphi_n\rangle\\
[\M C_{\VRand Y}]_{m,n}={\rm Cov}_{\gRand g}(\varphi_m,\varphi_n)=\langle \Op A \varphi_m,\varphi_n \rangle,
\end{align}
with the symmetric matrices $\M C_{\VRand Y}$ being positive-definite for any combination of linearly independent $\varphi_1,\dots,\varphi_N \in \Spc S$.
\end{theorem}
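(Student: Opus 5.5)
The plan is to construct the process from its characteristic functional and to obtain existence from the Bochner--Minlos theorem, which holds on the dual of any nuclear space. I would define the candidate functional $\Form(\varphi)=\exp\big(\jj\langle\mu,\varphi\rangle-\tfrac12\langle\Op A\varphi,\varphi\rangle\big)$ on $\Spc S$ and check the three Minlos hypotheses. First, normalization: $\Form(0)=1$ is immediate. Second, continuity: since $\mu\in\Spc S'$ and $\Op A:\Spc S\to\Spc S'$ is continuous, the map $\varphi\mapsto\langle\Op A\varphi,\varphi\rangle$ is continuous on $\Spc S$. Here I would use that the duality pairing is jointly sequentially continuous on $\Spc S\times\Spc S'$, which follows from the Montel and Banach--Steinhaus properties of nuclear spaces. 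Continuity of $\Form$ then follows by composition.

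Third, positive-definiteness: for any $\varphi_1,\dots,\varphi_N\in\Spc S$ and $z\in\C^N$, the quantity $\sum_{m,n}z_m\bar z_n\Form(\varphi_m-\varphi_n)$ must be nonnegative. I would reduce this to finite dimensions. Consider the matrix $\M C$ with $[\M C]_{m,n}=\langle\Op A\varphi_m,\varphi_n\rangle$ and the vector $\V\mu$ with $[\V\mu]_n=\langle\mu,\varphi_n\rangle$. By symmetry and positivity of $\Op A$, the matrix $\M C$ is symmetric positive-semidefinite. Writing $\varphi=\sum_n\xi_n\varphi_n$, we get $\Form(\varphi)=\exp\big(\jj\V\xi^\Tr\V\mu-\tfrac12\V\xi^\Tr\M C\V\xi\big)$, which is the characteristic function \eqref{Eq:MultiGaussFourier} of a (possibly degenerate) Gaussian vector. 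Bochner's theorem in $\R^N$ therefore gives the required positivity. Minlos then yields a unique probability measure on $\Spc S'$ with characteristic functional $\Form$. The random element $\gRand g$ distributed according to this measure is by construction a random continuous linear functional, which settles linearity and continuity as in Definition~\ref{Theo:GGauss}.

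Next, the finite-dimensional laws. For fixed $\varphi_1,\dots,\varphi_N$, the characteristic function of $\VRand Y$ at $\V\xi$ equals $\Form(\sum_n\xi_n\varphi_n)$ by linearity. By the computation above, this is exactly the Gaussian characteristic function with the stated mean $\V\mu_{\VRand Y}$ and covariance $\M C_{\VRand Y}$, so $\VRand Y$ is Gaussian with these parameters. Mutual compatibility is automatic, because all these vectors are marginals of a single measure. Uniqueness, in the sense that the process is fully specified by $\Form$, also comes from Minlos, or equivalently from the fact that the finite-dimensional distributions determine the measure on the cylinder $\sigma$-algebra.

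Finally, positive-definiteness of $\M C_{\VRand Y}$ for linearly independent $\varphi_n$. For $\V\xi\neq\V0$ we have $\V\xi^\Tr\M C_{\VRand Y}\V\xi=\langle\Op A\varphi,\varphi\rangle$ with $\varphi=\sum_n\xi_n\varphi_n\neq0$, and this is strictly positive by assumption. The main obstacle is the Minlos step itself, namely its validity on an abstract complete nuclear $\Spc S$. I would invoke it as a known theorem (Gelfand--Vilenkin) rather than reprove it, taking care that the continuity of $\Form$ is established in the nuclear topology and not merely sequentially; I expect the Montel property to bridge this gap.
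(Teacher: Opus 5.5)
Your proposal takes the same route as the paper's sketch: the Minlos--Bochner theorem applied to the Gaussian functional, then the substitution $\varphi=\sum_n\xi_n\varphi_n$ to identify the finite-dimensional Gaussian laws. It also correctly fills in the positive-definiteness check (reduction to a possibly degenerate Gaussian vector in $\R^N$) and the strict positivity of $\M C_{\VRand Y}$, which the paper leaves as ``one can show.''

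On the continuity point you flag, the right tool is barrelledness rather than the Montel property. Barrelledness holds for the Fréchet and LF spaces of interest, and indeed for any reflexive $\Spc S$. By Cauchy--Schwarz, $\langle\Op A\varphi,\varphi\rangle^{1/2}=\sup\{\langle\Op A\psi,\varphi\rangle:\langle\Op A\psi,\psi\rangle\le 1\}$ is a supremum of continuous linear functionals, hence a lower-semicontinuous seminorm. Its closed unit ball is therefore a barrel, so the seminorm, and hence $\Form$, is genuinely continuous and not merely sequentially continuous.
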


\begin{proof}[\small Sketch of proof]
{\small
The proof relies on the powerful Minlos-Bochner theorem that applies to nuclear spaces exclusively. It states that a functional $\Form: \Spc S \to \C$ is the characteristic function of a measure on the dual space $\Spc S'$ if and only if it is continuous, positive-definite (see Definition \ref{Def:PDFunctional} below), and such that $\Form(0)=1$. One can then show that these properties are satisfied by $\Form_{\gRand g}$ given by \eqref{Eq:CharacGauss} under the condition that the operator $\Op A$ is positive-definite. 

Once the characteristic functional is known, we determine the finite-dimensional
characteristic function of the vector variable $\VRand Y$ by making the substitution $\varphi=
\wRand_1 \varphi_1+\dots+\wRand_N \varphi_N$ with $\bwRand=(\wRand_n) \in \R^N$ being our Fourier-domain variable. This yields 
\begin{align*}
\hat p_{\VRand y}(\bwRand)&=\Form_{\gRand g}(\wRand_1 \varphi_1+\dots+\wRand_N \varphi_N)\\
&=\exp\left(\jj \sum_{n=1}^N\wRand_n \langle \mu_{\gRand g}, \varphi_n\rangle-\tfrac{1}{2} \sum_{m=1}^N\sum_{n=1}^N\wRand_m\wRand_n\langle\Op A \varphi_m,\varphi_n\rangle\right)\\
&=\exp\left(\jj  \V \mu_{\VRand Y}^\Tr\bwRand-\tfrac{1}{2} \bwRand^\Tr \M C_{\VRand Y} \bwRand\right),
\end{align*}
where we have made use of the (bi)linearity of the duality product and identified
the underlying mean vector $\V \mu_{\VRand Y}$ and covariance  matrix $\M C_{\VRand Y}$.
This is the same as \eqref{Eq:MultiGaussFourier}, which proves that $\VRand y\sim\Spc N(\V \mu_{\VRand Y},\M C_{\VRand Y})$.
} 
\end{proof}
\begin{definition}[Positive-definite functional]  
\label{Def:PDFunctional}
Let $\Spc X$ be a topological vector space (e.g., $\Spc S$ or $\Spc H'$).  
A functional $\widehat{\mathscr{P}} : \Spc X \to \C$ is said to be \emph{positive-definite} if, for all finite collections $\varphi_1, \dots, \varphi_N \in \Spc X$ and $c_1, \dots, c_N \in \mathbb{C}$, we have that\begin{align}
\label{Eq:PSfunction}
\sum_{n=1}^N \sum_{m=1}^N c_n \overline{c_m} \, \widehat{\mathscr{P}}(\varphi_n - \varphi_m) \geq 0.
\end{align}
\end{definition}
For consistency with Definition \ref{Def:CondPositiveDef2}, the Bochner condition \eqref{Eq:PSfunction} of probabilists ought rather be called ``positive semi-definiteness'' because the inequality is not strict. This means that the framework can also accommodate $\V p$-admissible covariance operators of the form given by  \eqref{Eq:Apadmis}.
This, in turns, yields a GGP such that $\gRand g(\phi)=0$ for all $\phi \in \Spc N_{\V \phi}$, which partially spoils the unconditional positive-definiteness of $\M C_{\VRand Y}$ stated in Theorem \ref{Theo:GGaussProcess}.

\subsection{Extended Generalized Gaussian Processes}
\label{Sec:ExtendedProcesses}
We now reveal the remarkable link between the generalized Gaussian processes of Theorem \ref{Theo:GGaussProcess} and the Hilbert spaces of Sections \ref{Sec:NtoHilbert} and \ref{Sec:NativeSpaces}.
To that end, we rewrite their characteristic functional as
$\Form_{\gRand g}(\varphi)= \exp\left( \jj \langle \mu,\varphi\rangle - \frac{1}{2} \|\varphi\|^2_{\Spc H'} \right)$, where
$\Spc H'$ is the (dual) Hilbert space identified in Theorem \ref{Theo:conditionPos} for the simpler case $(\V p=\{0\})$, and where $\Op A$ is symmetric positive-definite.
Even though the corresponding generalized Gaussian process $\gRand g \sim \Spc N(\mu,\Op A)$ is specified to act on test functions $\varphi \in \Spc S$, this allows one to extend its action to any member $\nu \in \Spc H'=\overline{(\Spc S,\|\cdot\|_{\Spc H'})}$ by essentially applying the same continuous extension principle as in the proof of Theorem \ref{Theo:conditionPos}.
This is legitimate, provided that the extended characteristic functional $\Form_{\gRand g}: \Spc H' \to \C$ induces the three defining properties of characteristic functions (Bochner's theorem): normalization with $\Form_{\gRand g}(0)=1$; continuity; and positive-definiteness.
The normalization is obviously conserved, while the continuity on $\Spc H'$ directly follows from the continuity of the underlying Hilbertian norm.
Similarly, one can use a limit/density argument to transfer the positive-definiteness on $\Spc S$ to the larger space $\Spc H'$.

%

\subsection{Innovation Models and Gaussian Solutions of SDEs}
\label{Sec:GaussianSDE}
We shall now use the functional tools of Section \ref{Sec:Deterministic} to identify an important class of GGP.
We first consider a proper GGP in $\Spc S'$ whose positive-definite covariance operator is factorizable as $\Op A=\Op T\Op T^\ast$, with $\Op T: L_2 \to  \Spc H'\embedC \Spc S'$.
The positive-definiteness of $(\Op T\Op T^\ast): \Spc S\to \Spc S'$ implies that each factor is invertible
with the inverse operator $\Lop=\Op T^{-1}$ being admissible in the sense of Definition \ref{Def:pAdmissibility} with $\V p=\{0\}$.
We claim that such a process can be synthesized by applying the linear transformation  $\Op T=\Lop^{-1}$ to the Gaussian white noise (or innovation process) $\gRand w\sim \Spc N(0,\Identity)$. In our formalism, this generation mechanism is expressed as
%
\begin{align}
\label{Eq:GenerateG}
\gRand G(\varphi)=\Op T\{\gRand W\}(\varphi)=\langle\Op T\{\gRand W\},\varphi\rangle=
\langle\gRand W,\Op T^\ast\{\varphi\}\rangle=\gRand w(\Op T^\ast\varphi),
\end{align}
where $\Op T^\ast: \Spc S \to L_2$ is the adjoint of $\Op T: L_2 \to \Spc S'$.
The continuity of $\Op T$ together with the property that the domain of the white noise $\gRand w$ is extendable to $L_2=(L_2)'$ (see Section \ref{Sec:ExtendedProcesses}) ensure that the right-hand side of \eqref{Eq:GenerateG} is well-defined for any $\varphi \in \Spc S$.
This leads to the conclusion that
\begin{align*}
\Form_{\gRand G}(\varphi)=\Form_{\gRand w}(\Op T^\ast \varphi) 
=\exp\left(-\tfrac{1}{2} \|\Op T^\ast\varphi\|_{L_2}^2\right)=\exp\left(-\tfrac{1}{2} \langle(\Op T \Op T^\ast) \varphi, \varphi\rangle\right),
\end{align*}
which proves that $\gRand G \sim \Spc N(0, \Op A)$.
We also observe that the factorization $\Op A=\Op T\Op T^\ast$ is not unique: there is a large equivalence class of possible transformations given by
$\tilde{\Op T}=\Op T\Op U $, where $\Op U$ is an arbitrary unitary operator $L_2 \to L_2$ with the property that $\Op U^{-1}=\Op U^\ast$. 


Next, we move to the scenario where the ``shaping'' operator $\Op T=\Op L^{-1}_{\V \phi}$ is the distributional extension of the operator specified by 
\eqref{Eq:gInverse} (right-inverse of a $\V p$-admissible
 $\Lop$). While this yields a generalized Gaussian process
 $\gRand G=\Op L^{-1}_{\V \phi}\{\gRand W\}$ whose covariance operator is only conditionally positive-definite, it is still well-defined and has a proper IR. Its innovation $\gRand W$ is recovered by applying the {\em whitening operator}
$\Lop$ as in
\begin{align}
\label{Eq:Innovationmodel0}
\Lop\{\gRand G\}: \varphi \mapsto \langle\Op \Lop\{\gRand G\},\varphi\rangle=
\langle\Op \Lop\Op L^{-1}_{\V \phi}\{\gRand W\},\varphi \rangle=\gRand W(\Op L^{{-1}\ast}_{\V \phi}\, \Lop^\ast\varphi)=W(\varphi),
\end{align}
where the adjoint manipulation is legitimate because: (i) the adjoint operator $\Lop^\ast$ continuously maps $\Spc H'_\Lop\to (L_2)'=L_2$ (the extended domain of $\gRand W$); and (ii)
$(\Lop\Op L^{-1}_{\V \phi} )^\ast=\Op L^{{-1}\ast}_{\V \phi}\Lop^\ast=\Identity$ on $L_2$ (by Theorem \ref{Theo:FactoRiesz} with $\Op G_\V \phi=\Op L^{{-1}\ast}_{\V \phi}$).
These properties imply that the process $\gRand g$, or rather its realizations $g \in \Spc S'$, can also be specified as 
solutions of a linear stochastic differential equation (SDE) driven by a Gaussian white noise $w \in \Spc S'$,
which can also be written as \eqref{Eq:PDE}, albeit with a ``weak'' (or distributional) interpretation of the equality.
The intuition there is that the ambiguity induced by the nontrivial null space of $\Lop$ gets resolved by imposing $N_0$ linear boundary conditions.

While the formal description $\gRand G=\Op L^{-1}_{\V \phi}\{\gRand W\}$ of a GGP looks rather innocuous, it is a remarkably powerful alternative to the classic integral representation of Gaussian stochastic processes. We shall make our point by considering a
famous instance of such a representation: Mandelbrot's definition \cite{Mandelbrot1968} of fractional Brownian motion (fBm) with Hurst index $H \in\big[\tfrac{1}{2},\tfrac{1}{2}+1 \big)$ as 
\begin{align}
\label{Eq:fBMH}
\Rand B_H(t)=\int_{\R }\frac{1}{\Gamma(H+\frac{1}{2})} \left(t-\tau)^{H-\frac{1}{2}}_+-(-\tau)_+^{\gamma-1}\right)\dint \Rand B(\tau),
\end{align}
where $\Rand B(\cdot)$ is the Wiener process (considered as a random measure) and $\dint\Rand B$ the representation of Gaussian white noise in the classic notation of stochastic calculus.
The equivalent description of fBm in the present framework is
$B_H=\Op L^{-1}_{\V \phi}\{\Rand W\}$ with $\Lop= \Dop^{\gamma}$, $\gamma=H+\frac{1}{2}  \in [1,2)$, $N_0=1$, and $(p_1,\phi_1)=(1,\delta)$. The operator $\Dop^{\gamma}$
is the fractional derivative of order $\gamma$ with Fourier symbol $(\jj \omega)^\gamma$, which induces fractional splines \cite{Unser2000}. Its Green's function can be found in Table 1. 
One readily verifies that the integral kernel in \eqref{Eq:fBMH} is the Schwartz kernel of the fractional integral operator $\Op L^{-1}_{\V \phi}=\Op D^{-\gamma}_{\delta}$. This equivalence also yields a simpler description of fBm as the solution of the fractional SDE
\begin{align}
\Dop^{H +\frac{1}{2}}\{\Rand B_H\}=\Rand W \quad  \mbox{ s.t.} \quad B_H(0)=0,\end{align}
which provides immediate insights on the fractal and regularity properties of these processes \cite{Blu2007a}.
As expected, the correlation function of fBm coincides with the Schwartz kernel of the covariance operator: 
\begin{align}
 \label{Eq:CovfBm}
 \Exp\{\Rand B_H(t)\Rand B_H(\tau) \}=\Op A_{\V \phi}\{\delta_\tau\}(t)=\tfrac{1}{2}\big(|t|^{2H}+|\tau|^{2H}-|t-\tau|^{2H} \big) \end{align}
with the case of Brownian motion being recovered for $H=\tfrac{1}{2}$ with $\Lop=\Dop$.
\subsection{Generalized Splines as MMSE Estimators of GGP}
\label{Sec:MMSESplines}
We now have all the elements to 
make the connection between the minimum-error estimation of a signal under the Gaussian hypothesis---also known as the {\em Wiener estimator}---and the spline reconstruction techniques investigated in Section \ref{Sec:Deterministic}.
To that end, we consider the linear measurement model
\begin{align}
\label{Eq:Wienerproblem}
g \mapsto \M y=\V \nu(g) + \V \epsilon \in \R^M,
\end{align}
where $g$ (our signal) is a realization of a Gaussian process $\gRand g \sim \Spc N(0,\Op A)$ in $\Spc S'$  
 and where $\V \nu=(\nu_m)$ with $\nu_1,\dots, \nu_M\in \Spc H'$ is a continuous linear measurement operator $\V \nu: \Spc H \to \R^M$.
The second component of the model, $\V \epsilon \in \R^M$, is an additive disturbance term (discrete measurement noise) that does not depend on the signal and whose components are i.i.d.\ Gaussian with zero mean and variance 
$\sigma^2$. 

To keep the argument simple, we focus on the scenario $\Op A=(\Lop^\ast\Lop)^{-1}: \Spc S \to \Spc S'$
with $\Lop$ invertible ($N_0=0$), for which we have explicitly solved
the deterministic inverse problem in Section \ref{Sec:Tikhonov}. Specifically, we know that the {\em Tikhonov estimator} of $g$ in \eqref{Eq:Wienerproblem} given $\M y$ is the generalized smoothing spline
\begin{align}
g_\lambda&=\arg \min_{g \in \Spc H}\left( \|\M y - \V \nu(g)\|^2_2 + \lambda \|\Lop g\|^2_{L_2}\right) \nonumber\\
&=\sum_{m=1}^M a_m \Op A\{\nu_m\}\quad \mbox{ with } \quad (a_m)=(\M G + \lambda \M I)^{-1} \M y, 
\label{Eq:GSmoothingSpline}
\end{align}
where the matrix $\M G \in \R^{M \times M}$ with $[\M G]_{m,n}=\langle \Op A \nu_m, \nu_n\rangle=\langle \nu_m,\nu_n\rangle_{\Spc H'}$ is symmetric positive-definite. 

As alternative, we shall now 
apply the Bayesian paradigm to obtain the MMSE estimator of the current realization $g: \varphi \to \langle g, \varphi\rangle$ of the generalized Gaussian process $\gRand g$ given its measurements $\M y$. The first enabling element is the probabilistic transcription of the linear measurement model 
\eqref{Eq:Wienerproblem} as $\VRand Y=\VRand Y_0+ \V \epsilon$ with $\V \epsilon \sim \Spc N(\V 0, \sigma^2 \M I)$ where
$\VRand Y_0=(\gRand g(\nu_1),\dots,\gRand g(\nu_M))$ is the random noise-free measurement vector induced by $\gRand G$.
The second important insight is that, if we fix the test function $\varphi\in \Spc H'$, then the quantity to estimate,  $\Rand X=\gRand g(\varphi)$,
is a scalar random variable. 

Our strategy to infer $\Rand X$ from $\VRand Y=\M y$ is based on the determination of
the pdf of the augmented vector-variable $\VRand z=(\Rand X, \VRand Y)$, which contains all the necessary statistical information and can be predicted to be Gaussian. As first step, we apply Theorem \ref{Theo:GGaussProcess}
 to the noise-free variable  $\VRand z_0=(\Rand X, \VRand Y_0)$, which gives
$\VRand z_0\sim \Spc N(\V 0, \M C_{\VRand z_0})$ with
\begin{align}
\label{Eq:AugmentedCovar0}
\M C_{\VRand z_0}=\left(\begin{array}{cc}
\sigma_\Rand x^2  &   \M c^\Tr    \\[0.5ex]
\M c  &   \M G 
\end{array}
\right),
\end{align}
where $\sigma_\Rand x^2=\langle \Op A \varphi, \varphi\rangle$, $\M c \in \R^M$ with $[\M c]_m=\langle \Op A \varphi, \nu_m,\rangle=\Exp\big\{\Rand x\hspace{.08em}\Rand y_{0,m}\big\}$, and $\M G=\M C_{\VRand Y_0} \in \R^{M \times M}$ with $[\M G]_{m,n}=\langle \nu_m,\Op A \nu_n \rangle=\Exp\big\{\Rand y_{0,m}\hspace{.08em}\Rand y_{0,n}\big\}$.
Since the noise has zero mean and is independent of the signal (and, hence of the noise-free measurements $\VRand Y_0$ of $\gRand G$), we also have that $\Exp\big\{\Rand x\hspace{.08em}\Rand y_{0,m}\big\}=\Exp\big\{\Rand x\hspace{.08em}\Rand y_{m}\big\}$ and $\VRand Y\sim \Spc N(\V 0, \M C_{\VRand Y_0}+\sigma^2 \M I)$. This allows us to conclude that
 $\VRand z\sim \Spc N(\V 0, \M C_{\VRand z})$ with
\begin{align}
\label{Eq:AugmentedCovarZ}
\M C_{\VRand z}=\left(\begin{array}{cc}
\sigma_\Rand x^2  &   \M c^\Tr    \\[0.5ex]
\M c  &   \M G + \sigma^2 \M I 
\end{array}
\right).
\end{align}
By remenbering that $\langle \Op A \varphi, \nu_m\rangle=\langle \varphi^\ast, \nu_m\rangle_{\Spc H'}=\langle \nu^\ast_m,\varphi
\rangle_{\Spc H}$, we further identify the covariance vector in \eqref{Eq:AugmentedCovarZ} as $\M c=\V \nu^\ast(\varphi)=\V \nu(\Op A \varphi)$, where $\V \nu^\ast: \Spc H' \to \R^M$ 
is the representer in $\Spc H$ of the measurement operator $\V \nu$. The other remarkable aspect is that $\M G$ in \eqref{Eq:AugmentedCovarZ} is the same matrix as in \eqref{Eq:GSmoothingSpline}

We are now in familiar territory and can invoke a standard result in multivariate statistics:
If two sets of variables are jointly Gaussian, then the distribution of one set conditioned on the other is a multivariate Gaussian whose mean and covariance are in direct relation with the partitioned mean vector and covariance matrix of the joined random vector (see Equations (2.81)-(2-82) p. 87 \cite{Bishop2006}). When applied to our setting, this tells us that
$
p_{\Rand X|\VRand Y=\M y}
$
is univariate Gaussian with mean
\begin{align}
\Exp\{\Rand X|\M y\}=\mu_\Rand X + \M c^\Tr \M C^{-1}_{\VRand Y}(\M y-\V \mu_{\VRand Y})= \V \nu^\ast(\varphi)^\Tr (\M G + \sigma^2 \M I)^{-1} \M y
\end{align}
and variance
\begin{align}
\sigma^2_{\Rand X|\M y}=\sigma_\Rand X^2-   \M c^\Tr \M C^{-1}_{\VRand Y}\M c=\langle \Op A \varphi,\varphi\big\rangle + \V \nu^\ast(\varphi)^\Tr  (\M G + \sigma^2 \M I)^{-1}   \V \nu^\ast(\varphi). 
\end{align}
Moreover, we know that the conditional mean $\Exp\{\Rand X|\VRand Y=\M y\}$ is the MMSE estimate of $\Rand X$ given $\M y$---a result that holds for any distribution.

We now summarize the outcome of this derivation in relation to our initial signal-recovery problem.
\begin{theorem}[Generalized Tikhonov-Wahba-Wiener theorem] 
\label{Theo:GenGaussMarkov}
We consider the following setting.
\begin{itemize}
\item The signal $g=\{g(\varphi)\}_{\varphi \in \Spc S} \in \Spc S'$ to be recovered  is a realization of a generalized Gaussian process $\gRand g\sim \Spc N(0,\Op A) $ in $\Spc S'$.
\item There exists an invertible whitening/regularization operator $\Lop$ such that $\Op A=(\Lop^\ast \Lop)^{-1}: \Spc S \to \Spc S'$ is positive-definite. This operator induces a native Hilbert space $\Spc H=\{f\in \Spc S': \|\Lop f\|_{L_2}<\infty\}$ whose continuous dual $\Spc H'$ is equipped with the inner product $\langle \nu_1,\nu_2\rangle_{\Spc H'}=\langle \Op A\Op \nu_1,\nu_2\rangle$.
\item The observed data are $\M y=\V \nu(g) + \V \epsilon \in \R^M$,
with $\V \nu=(\nu_1, \dots,\nu_M)\in (\Spc H')^M$ and $\V \epsilon$ a realization of a white Gaussian noise  with variance $\sigma^2$.
\end{itemize}
Then, for any $\varphi\in \Spc H'\supset \Spc S$, the MMSE estimator of
$g(\varphi)$ given $\M y$ 
is 
\begin{align}
\label{Eq:MMSEsol0}
g_{{\rm MMSE}}(\varphi|\M y)=\Exp\{\gRand g( \varphi)|\M y\}
&=g_{\lambda}( \varphi)\quad \mbox{ with }\quad  \lambda=\sigma^2,
\end{align}
where 
$g_{\lambda}\in \Spc H$ is the generalized smoothing spline defined by \eqref{Eq:GSmoothingSpline}.

\end{theorem}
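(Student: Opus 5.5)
The proof compares two closed-form expressions that have already been derived. One is the Bayesian conditional mean obtained just before the statement. The other is the Tikhonov solution \eqref{Eq:GSmoothingSpline} from Section \ref{Sec:Tikhonov}. The plan has three steps.

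\emph{Step 1: the Bayesian side is well defined.} Fix $\varphi\in\Spc H'$. By Section \ref{Sec:ExtendedProcesses}, the characteristic functional $\Form_{\gRand g}(\varphi)=\exp(-\frac12\|\varphi\|^2_{\Spc H'})$ extends continuously from $\Spc S$ to $\Spc H'$. Hence $\Rand X=\gRand g(\varphi)$ and $\VRand Y_0=(\gRand g(\nu_1),\dots,\gRand g(\nu_M))$ are well-defined jointly Gaussian variables. Their covariance is given by the formulas of Theorem \ref{Theo:GGaussProcess}, extended to $\Spc H'$ by density: approximate $\varphi$ and the $\nu_m$ by elements of $\Spc S$ in $\|\cdot\|_{\Spc H'}$ and pass to the limit in the characteristic function of the finite vector. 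This yields the covariance \eqref{Eq:AugmentedCovarZ}, with $\M G+\sigma^2\M I$ invertible because $\sigma^2>0$ (and because $\M G$ is positive-definite when the $\nu_m$ are independent). The Gaussian conditioning formula then gives
\begin{align*}
\Exp\{\Rand X|\M y\}=\M c^\Tr(\M G+\sigma^2\M I)^{-1}\M y, \qquad [\M c]_m=\langle \Op A\varphi,\nu_m\rangle .
\end{align*}
Since the conditional mean minimizes the mean-square error, this expression is $g_{\rm MMSE}(\varphi|\M y)$.

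\emph{Step 2: the spline side.} Evaluate $g_\lambda$ from \eqref{Eq:GSmoothingSpline} on $\varphi$. This is legitimate because $g_\lambda\in\Spc H$ and $\varphi\in\Spc H'$. Linearity gives
\begin{align*}
g_\lambda(\varphi)=\sum_{m}a_m\langle \Op A\nu_m,\varphi\rangle, \qquad (a_m)=(\M G+\lambda\M I)^{-1}\M y .
\end{align*}
Symmetry of $\Op A$ gives $\langle \Op A\nu_m,\varphi\rangle=\langle \nu_m,\varphi\rangle_{\Spc H'}=\langle\Op A\varphi,\nu_m\rangle=[\M c]_m$. On $\Spc S$ this is the symmetry in Definition \ref{Def:CondPositiveDef2}. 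On $\Spc H'$ it holds because $\Op A=\Op J_{\Spc H'}$ is the Riesz map (Theorem \ref{Theo:conditionPos} with $\V p=\{0\}$, combined with item 3 of Theorem \ref{Def:DualMap}). Therefore $g_\lambda(\varphi)=\M c^\Tr(\M G+\lambda\M I)^{-1}\M y$.

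\emph{Step 3: comparison.} Set $\lambda=\sigma^2$. Both $(\M G+\sigma^2\M I)^{-1}$ and $\M G$ are symmetric, so the two expressions coincide. The identity holds for every $\varphi\in\Spc H'$, so the estimator, viewed as a linear functional, is the element $g_\lambda\in\Spc H$.

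The main obstacle is Step 1. The covariance formulas and the Gaussian law are only established for test functions in $\Spc S$, whereas $\varphi$ and $\nu_m$ lie in $\Spc H'$. The way through is an $L_2(\Omega)$ limit argument: the map $\varphi\mapsto\gRand g(\varphi)$ is an isometry from $(\Spc S,\|\cdot\|_{\Spc H'})$ into $L_2(\Omega)$, so it extends uniquely to $\Spc H'$. $L_2$-limits of jointly Gaussian vectors are jointly Gaussian, and their covariances are the limits of the covariances. Everything else is finite-dimensional linear algebra.
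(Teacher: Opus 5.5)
Your proposal is correct and follows essentially the same route as the paper. Both arguments establish joint Gaussianity of the augmented vector $(\Rand X,\VRand Y)$ with covariance \eqref{Eq:AugmentedCovarZ}, apply the Gaussian conditioning formula, and identify $\M c=\V\nu(\Op A\varphi)$ and $\M G$ with the quantities in the smoothing spline \eqref{Eq:GSmoothingSpline}; your $L_2(\Omega)$-isometry argument for extending $\varphi\mapsto\gRand g(\varphi)$ from $\Spc S$ to $\Spc H'$ is just a more explicit version of the extension the paper sketches in Section~\ref{Sec:ExtendedProcesses}.
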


Since the result in Theorem \ref{Theo:GenGaussMarkov} holds for any $\varphi\in  \Spc S$, it gives a precise meaning to the statement that the generalized spline $g_\lambda=\{g_\lambda(\varphi): \varphi \in \Spc S\} \in  \Spc S'$ with $\lambda=\sigma^2$ is the optimal estimate of the generalized Gaussian process
$\gRand G=\{\gRand g(\varphi): \varphi \in \Spc S\}$ in $\Spc S'$ given the measurements $\M y$. As for the case where $\Spc H$ is a RKHS of ordinary functions $f: \R^d \to \R$, we have that $\delta_{\V x_0} \in \Spc H'$ for any 
$\V x_0 \in \R^d$. Consequently, we can plug $\varphi=\delta_{\V x_0}$ in \eqref{Eq:MMSEsol0}, which yields Wahba's equivalence for classic  Gaussian processes $\gRand G=\{\gRand g(\V x)\}_{\V x \in \R^d}$ whitened by $\Lop$; namely, that $\Exp\{\gRand g(\V x_0)|\M y\}=g_{\lambda}(\V x_0)$
with $\lambda=\sigma^2$.

We shall now state two variants of Theorem \ref{Theo:GenGaussMarkov} for GGPs that are whitened by a $\V p$-admissible operator $\Lop$.
The first is the direct generalization of a weaker variant of \eqref{Eq:MMSEsol0} for ordinary stochastic processes that states that $\Lop$-smoothing splines yield
the {\em best linear unbiased} (BLU) estimator of a stochastic process $\{\gRand X(t)\}$ whitened by $\Lop$ under the assumption of a fixed-effect model that includes an additional deterministic drift $p(\cdot) \in \Spc N_\V p$ \cite{Wahba1990,Berlinet2004}. In our formalism, the generalized version of this model reads
\begin{align}
\label{Eq:FixedEffect}
\begin{cases}\gRand G&=\sum_{n=1}^{N_0}  \theta_n p_n + \gRand X\\
\Rand Y_m&=\gRand G(\nu_m) + \epsilon_m,\quad m=1,\dots,M,
\end{cases}
\end{align}
where $\V \theta=(\theta_n)$ is fixed but unknown, $\V \epsilon=(\epsilon_m)\sim \Spc N(0,\sigma^2\M I)$, and where $\Rand X\sim \Spc N(0,\Op A_{\V \phi})$ is a GGP
whose covariance operator $\Op A_{\V \phi}: \Spc S \to \Spc S'$ is given by \eqref{Eq:Apadmis} with $\Op A=(\Lop^\ast \Lop)^{-1}$ where $\Lop$ is ${\V p}$-admissible with $\V p=(p_1, \dots,p_{N_0})$.

The BLU estimator of $\gRand G(\varphi)$ in \eqref {Eq:FixedEffect} given the observed data $\VRand Y=\M y$ has the form
$g_{\rm BLU}(\varphi|\M y)=\V \beta^\Tr \M y$ (linearity), with its coefficient-vector $\V \beta=\V \beta(\varphi) \in \R^M$ being selected such as to minimize $\Exp\{\big(g_{\rm BLU}(\varphi|\M y)- \gRand G(\varphi)\big)^2\}$ subject to the zero-bias constraint $\Exp\{g_{\rm BLU}(\varphi|\M y)- \gRand G(\varphi)|\V \theta\}=0$ for all $\varphi \in \Spc H'$. The relation of this estimator to splines is stated in Theorem \ref{Theo:WahbaKimeldorf}, with the proof
being exactly the same as in the classic case.
\begin{theorem}[Wahba-Kimeldorf \cite{Kimeldorf1971}]
\label{Theo:WahbaKimeldorf}
Let $g_\lambda$ be the generalized smoothing spline defined by
\begin{align}
g_\lambda=\arg \min_{g \in \Spc H}\left( \|\M y - \V \nu(g)\|^2_2 + \lambda \|\Lop g\|^2_{L_2}\right).
\end{align}
Then, for any $\varphi \in \Spc H'$ and $\lambda=\sigma^2$, $g_\lambda(\varphi)=g_{\rm BLU}(\varphi|\M y)$ where $g_{\rm BLU}(\varphi|\M y)$ is the {\rm best linear unbiased estimator} of $\gRand G(\varphi)$ in \eqref {Eq:FixedEffect} given $\M y$.
\end{theorem}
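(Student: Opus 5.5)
Fix $\varphi\in\Spc H'$ and reduce both sides to finite-dimensional linear algebra on the pair $(\gRand G(\varphi),\VRand Y)$, using only second moments. The goal is to show that the $\V\beta$ solving the BLU problem coincides with the coefficient vector that represents the spline evaluation $g_\lambda(\varphi)$ as a linear function of $\M y$.

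\textbf{BLU side.} Write $\M P\in\R^{M\times N_0}$ with $[\M P]_{m,n}=\langle\nu_m,p_n\rangle$ (its columns are the $\M v_n$, linearly independent by assumption), and $\M q=\V p(\varphi)\in\R^{N_0}$. By the model \eqref{Eq:FixedEffect}, $\Exp\{\V\beta^\Tr\VRand Y-\gRand G(\varphi)\,|\,\V\theta\}=(\M P^\Tr\V\beta-\M q)^\Tr\V\theta$. Hence unbiasedness for all $\V\theta$ is the linear constraint $\M P^\Tr\V\beta=\M q$. Under this constraint the drift cancels, and the error variance is
\[
\V\beta^\Tr(\M G_{\V\phi}+\sigma^2\M I)\V\beta-2\V\beta^\Tr\M c+\langle\Op A_{\V\phi}\varphi,\varphi\rangle,
\]
where $[\M G_{\V\phi}]_{m,n}=\langle\Op A_{\V\phi}\nu_m,\nu_n\rangle$ and $[\M c]_m=\langle\Op A_{\V\phi}\varphi,\nu_m\rangle$. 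These are computed via Theorem \ref{Theo:GGaussProcess}, extended to $\Spc H'$ as in Section \ref{Sec:ExtendedProcesses}. Since $\sigma^2>0$, the quadratic form is strictly positive definite. The Lagrange/KKT system
\[
(\M G_{\V\phi}+\sigma^2\M I)\V\beta+\M P\V\eta=\M c,\qquad \M P^\Tr\V\beta=\M q
\]
therefore has a unique solution, because $\M P$ has full column rank.

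\textbf{Spline side.} Theorem \ref{Theo:GeneralRepSemiHilbert} with quadratic $E$ gives $g_\lambda=\sum_m\tilde a_m\Op A_{\V\phi}\nu_m+\sum_n b_np_n$, rewritten in the equivalent form \eqref{Eq:f0lin3} but using $\Op A_{\V\phi}$. This kernel is legitimate because, by Theorem \ref{Theo:FactoRiesz}, $\|\Lop g\|^2=\langle\Op A^{-1} g,g\rangle$ on $\Spc H'_{\Op A}$, and $\Op A_{\V\phi}$ is the Riesz map there. The representation also requires the orthogonality condition $\M P^\Tr\tilde{\M a}=\V 0$, which is what makes $\tilde a_m\Op A_{\V\phi}\nu_m$ act like $\tilde a_m\Op A\nu_m$. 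Substituting, the objective becomes $\|\M y-\M G_{\V\phi}\tilde{\M a}-\M P\M b\|^2+\lambda\tilde{\M a}^\Tr\M G_{\V\phi}\tilde{\M a}$. Its normal equations are the classical Wahba system
\[
(\M G_{\V\phi}+\lambda\M I)\tilde{\M a}+\M P\M b=\M y,\qquad \M P^\Tr\tilde{\M a}=\V 0.
\]
Then $g_\lambda(\varphi)=\M c^\Tr\tilde{\M a}+\M q^\Tr\M b$.

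\textbf{Matching.} With $\lambda=\sigma^2$, write $\M K=\begin{pmatrix}\M G_{\V\phi}+\sigma^2\M I&\M P\\ \M P^\Tr&\M 0\end{pmatrix}$, which is symmetric and invertible. Then $(\tilde{\M a},\M b)=\M K^{-1}(\M y,\V 0)$ and $(\V\beta,\V\eta)=\M K^{-1}(\M c,\M q)$. Symmetry of $\M K$ gives
\[
g_\lambda(\varphi)=(\M c,\M q)^\Tr\M K^{-1}(\M y,\V 0)=(\V\beta,\V\eta)^\Tr(\M y,\V 0)=\V\beta^\Tr\M y=g_{\rm BLU}(\varphi|\M y).
\]

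\textbf{Main obstacle.} The delicate step is justifying the spline parameterization with kernel $\Op A_{\V\phi}$ and the side condition $\M P^\Tr\tilde{\M a}=\V 0$. The point is that $\sum_m\tilde a_m\nu_m$ annihilates $\Spc N_{\V p}$ and so lies in $\Spc H_{\Op A}$. On that subspace $\Op A_{\V\phi}=\Op A$ and the spline energy equals $\tilde{\M a}^\Tr\M G_{\V\phi}\tilde{\M a}$. I would derive this from \eqref{Eq:f0lin2} by choosing the $\M u_m$ so that the $\widetilde{\nu}_m$ span exactly $\{\tilde{\M a}^\Tr\V\nu:\M P^\Tr\tilde{\M a}=\V 0\}$. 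One must also check that using $\Op A_{\V\phi}$ rather than $\Op A$ in $\M c$ is harmless. The $\V\phi$-dependent correction terms lie in $\Spc N_{\V p}$, so they are absorbed by the unbiasedness constraint on the BLU side and by $\M b$ on the spline side.
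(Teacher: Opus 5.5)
Your argument is correct and is exactly the classic Kimeldorf--Wahba (universal-kriging) proof that the paper defers to: the BLU weights $(\V\beta,\V\eta)$ and the spline coefficients $(\tilde{\M a},\M b)$ solve systems with the same symmetric bordered matrix $\M K$, with right-hand sides $(\M c,\M q)$ and $(\M y,\V 0)$, and the symmetry of $\M K$ yields $g_\lambda(\varphi)=\V\beta^\Tr\M y$. Two small points to tighten: $\Op A_{\V\phi}\nu$ and $\Op A\nu$ agree on $\Spc H_{\Op A}$ only modulo $\Spc N_{\V p}$ (which you correctly absorb into $\M b$), and the bordered spline system is not literally the normal equations but a solution of them with zero multiplier, which suffices by convexity and the uniqueness of $g_\lambda$.
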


Our final result fills the gap that was stated in the introduction by showing that every generalized spline interpolant is the MMSE
estimator of some GGP. The condition that makes this possible is $\Spc N_{\V \phi} \subset \Spc N_{\V \nu}$, which means that the observations of the process must be rich enough to span the boundary conditions $\V \phi(\gRand G)=\V 0$, which are essential in our theory.
\begin{theorem}[MMSE interpolation of a GGP]
\label{Theo:Unser}
Let $\Rand Y_m=\gRand G(\nu_m), m=1,\dots,M$ be a finite collection of observations of a generalized Gaussian process $\Rand G\sim \Spc N(0,\Op A_{\V \phi})$ in $\Spc S'$ that is whitened by a $\V p$-admissible operator $\Lop$. Then, under the condition that $\Spc N_{\V \phi} \subset {\rm span}\{ \nu_m\}_{m=1}^{M}\subset \Spc H'$, the {\em MMSE interpolator} of $\gRand G$ given the observed data
$\VRand y=\M y$ is the generalized spline interpolant 
\begin{align}
g_0=\arg \min_{g \in \Spc H} \|\Lop g\|_{L_2}
\quad \mbox{s.t.}\quad \V \nu(g_0)=\M y.
\end{align}
It is such that $\Exp\{\gRand G(\varphi)|\M y\}=g_0(\varphi)$ for any $\varphi \in \Spc H'\supset \Spc S$.
\end{theorem}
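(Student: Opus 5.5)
The plan is to reduce the statement to finite-dimensional Gaussian conditioning, as in the derivation that precedes Theorem \ref{Theo:GenGaussMarkov}, and then identify the resulting estimator with the spline interpolant.

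\textbf{Step 1: the conditional mean.} Fix $\varphi\in\Spc H'$ and set $\Rand X=\gRand G(\varphi)$ and $\VRand Y=(\gRand G(\nu_m))$. By Theorem \ref{Theo:GGaussProcess}, extended to $\Spc H'$ as in Section \ref{Sec:ExtendedProcesses}, the vector $(\Rand X,\VRand Y)$ is centered Gaussian. Its covariance blocks are $\sigma_X^2=\langle \Op A_{\V\phi}\varphi,\varphi\rangle$, $[\M c]_m=\langle \Op A_{\V\phi}\varphi,\nu_m\rangle$ and $[\M G]_{m,n}=\langle \Op A_{\V\phi}\nu_m,\nu_n\rangle$. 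The difficulty is that $\M G$ is now only positive semi-definite, since $\Op A_{\V\phi}$ annihilates $\Spc N_{\V\phi}$. So I would use the general Gaussian conditioning formula with a pseudo-inverse, or better, reparametrize the data. The hypothesis $\Spc N_{\V\phi}\subset{\rm span}\{\nu_m\}$ lets me choose an invertible $M\times M$ change of basis so that the new functionals are $(\phi_1,\dots,\phi_{N_0},\widetilde\nu_1,\dots,\widetilde\nu_{M-N_0})$ with $\V p(\widetilde\nu_m)=\V 0$, that is, $\widetilde\nu_m\in\Spc H_{\Op A}$. This is done by subtracting ${\rm Proj}_{\Spc N_{\V\phi}}$ from a complement. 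The observations $\gRand G(\phi_n)$ are almost surely $0$, so they carry no information and must equal the corresponding (consistent) data entries. The remaining Gram matrix $\widetilde{\M G}$, with $[\widetilde{\M G}]_{m,m'}=\langle\Op A\widetilde\nu_m,\widetilde\nu_{m'}\rangle=\langle\widetilde\nu_m,\widetilde\nu_{m'}\rangle_{\Spc H'}$, is positive-definite by the linear independence of the $\widetilde\nu_m$ in $\Spc H_{\Op A}$. Conditioning therefore gives
$$\Exp\{\gRand G(\varphi)|\M y\}=\widetilde{\M c}^\Tr\widetilde{\M G}^{-1}\widetilde{\M y},$$
where $[\widetilde{\M c}]_m=\langle\Op A_{\V\phi}\varphi,\widetilde\nu_m\rangle$.

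\textbf{Step 2: the spline interpolant.} Next I characterize $g_0$ using Theorem \ref{Theo:GeneralRepSemiHilbert} with a barrier loss, or directly by Hilbert-space projection. The native norm \eqref{Eq:NormNative} splits $g=g_A+p$ with $p\in\Spc N_{\V p}$ and $\V\phi(g_A)=\V 0$. The constraints $\langle\phi_n,g\rangle=y$-entries fix $p$ completely, because $\V p$ and $\V\phi$ are biorthogonal. The remaining constraints then act on $g_A$ only, as $\langle\widetilde\nu_m,g_A\rangle=\widetilde y_m-\langle\widetilde\nu_m,p\rangle$. However, $\langle\widetilde\nu_m,p\rangle=0$ because $\V p(\widetilde\nu_m)=\V0$. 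Minimizing $\|\Lop g_A\|=\|g_A\|_{\Spc H'_{\Op A}}$ under these constraints yields the minimum-norm element
$$g_A=\sum_m a_m\Op A\widetilde\nu_m,\qquad \M a=\widetilde{\M G}^{-1}\widetilde{\M y}.$$
This uses the Riesz map of Theorem \ref{Theo:conditionPos}, under which $\widetilde\nu_m^\ast=\Op A\widetilde\nu_m$.

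\textbf{Step 3: matching the two.} For arbitrary $\varphi\in\Spc H'$, I compute
$$g_0(\varphi)=\sum_m a_m\langle\Op A\widetilde\nu_m,\varphi\rangle+\langle p,\varphi\rangle.$$
I then compare this with $\widetilde{\M c}^\Tr\widetilde{\M G}^{-1}\widetilde{\M y}$, writing $\varphi=\varphi_A+{\rm Proj}_{\Spc N_{\V\phi}}\varphi$. Using symmetry of $\Op A$ and $\Op A_{\V\phi}\widetilde\nu_m=\Op A\widetilde\nu_m$ for $\widetilde\nu_m\in\Spc H_{\Op A}$, the $\varphi_A$ parts agree. The $\Spc N_{\V\phi}$ part of $\varphi$ has $\gRand G$-value $0$ almost surely, so its posterior mean is $0$. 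For consistency, the data $\langle\phi_n,\cdot\rangle$ must vanish, which forces $p=0$. The component $\varphi_{\V\phi}$ paired against $g_A$ also vanishes, since $\V\phi(g_A)=0$.

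\textbf{Main obstacle.} The delicate point is exactly this bookkeeping of the null-space and boundary components. Specifically, I must check that $\langle \Op A_{\V\phi}\varphi,\widetilde\nu_m\rangle=\langle\Op A\widetilde\nu_m,\varphi\rangle$ holds for all $\varphi\in\Spc H'$, not just for $\varphi\in\Spc H_{\Op A}$, and that the degenerate directions are handled consistently with the stated boundary conditions. I would verify the identity first for $\varphi\in\Spc S$ using the definition \eqref{Eq:Apadmis}, where the projector factors act trivially on $\widetilde\nu_m$. I would then extend it to $\Spc H'$ by density and continuity.
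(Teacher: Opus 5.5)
Your Step 1 is sound. Note that the paper gives no proof of this theorem; conditioning on the reparametrized data $(\V\phi,\widetilde{\V\nu})$ is the natural extension of the derivation preceding Theorem~\ref{Theo:GenGaussMarkov}.

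\textbf{The gap.} Steps 2 and 3 rest on a false identity. You take $\widetilde\nu_m^\ast=\Op A\widetilde\nu_m$. In your ``main obstacle'' you also assert $\langle \Op A_{\V\phi}\varphi,\widetilde\nu_m\rangle=\langle \Op A\widetilde\nu_m,\varphi\rangle$, on the grounds that ``the projector factors act trivially on $\widetilde\nu_m$.'' Only one of them does. Transposing gives $\langle \Op A_{\V\phi}\varphi,\widetilde\nu_m\rangle=\langle \Op A\widetilde\nu_m,(\Identity-\Op R_{\V\phi}\Op R_{\V p})\varphi\rangle$, so the second projector strips the $\Spc N_{\V\phi}$-component from $\varphi$, not from $\widetilde\nu_m$. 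Equivalently, $\Op A_{\V\phi}\widetilde\nu_m=\Op A\widetilde\nu_m-\sum_n p_n\langle\phi_n,\Op A\widetilde\nu_m\rangle$, and the correction is generally nonzero. For $\varphi=\phi_n$ the left side of your identity is $0$, since $\Op A_{\V\phi}$ annihilates $\Spc N_{\V\phi}$. The right side is $\langle\phi_n,\Op A\widetilde\nu_m\rangle$, so no density argument can rescue it.

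As a result, your $g_A=\sum_m a_m\Op A\widetilde\nu_m$ does not satisfy $\V\phi(g_A)=\V 0$. It violates the constraints $\langle\phi_n,g\rangle=0$ that you derived yourself. In Step 3, the $\Spc N_{\V\phi}$-part of $\varphi$ then does not pair to zero against it.

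L\'evy's example makes this concrete. Take $\Lop=\Dop$, $(p_1,\phi_1)=(1,\delta)$, $\nu_1=\delta$, $\nu_2=\delta_t$ with $t>0$, and data $(0,y)$. Then $\widetilde\nu=\delta_t-\delta$, $\widetilde{\M G}=t$ and $a=y/t$. Your candidate $a\,\Op A\widetilde\nu(x)=-\tfrac{y}{2t}(|x-t|-|x|)$ equals $-y/2$ at $x=0$ and $y/2$ at $x=t$. It misses both data points, whereas the posterior mean on $[0,t]$ is $xy/t$.

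\textbf{The fix.} Use the actual Riesz conjugate. The remark after Theorem~\ref{Theo:conditionPos} gives $\Op J_{\Spc H'}=\Op A_{\V\phi}+\Op R_{\V p}$. Hence, for $\V p(\widetilde\nu_m)=\V 0$, you get $\widetilde\nu_m^\ast=\Op A_{\V\phi}\widetilde\nu_m=(\Identity-\Op R_{\V p}\Op R_{\V\phi})\Op A\widetilde\nu_m$. This element satisfies $\V\phi(\widetilde\nu_m^\ast)=\V 0$ by biorthogonality. It also satisfies $\langle\Lop\widetilde\nu_m^\ast,\Lop g\rangle=\langle\widetilde\nu_m,g\rangle$, because $\Lop p_n=0$.

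The Gram matrix is unaffected, since $\langle\Op A_{\V\phi}\widetilde\nu_m,\widetilde\nu_{m'}\rangle=\langle\Op A\widetilde\nu_m,\widetilde\nu_{m'}\rangle$. The minimum-norm interpolant is therefore $g_0=\sum_m a_m\Op A_{\V\phi}\widetilde\nu_m$ with $\M a=\widetilde{\M G}^{-1}\widetilde{\M y}$. Step 3 then reduces to the symmetry $\langle\Op A_{\V\phi}\varphi,\widetilde\nu_m\rangle=\langle\Op A_{\V\phi}\widetilde\nu_m,\varphi\rangle$, extended from $\Spc S$ to $\Spc H'$ by continuity. No splitting of $\varphi$ is needed.

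In the language of Theorem~\ref{Theo:GeneralRepSemiHilbert}, you may keep $\widetilde\psi_m=\Op A\widetilde\nu_m$, but then the null-space coefficients are $b_n=-\sum_m a_m\langle\phi_n,\Op A\widetilde\nu_m\rangle\neq 0$. The vanishing of the component $\Op R_{\V p}\Op R_{\V\phi}g_0$ does not mean those coefficients vanish. The paper's shorthand ``$\Op A$ is the Riesz map $\Spc H_{\Op A}\to\Spc H'_{\Op A}$'' is only correct modulo $\Spc N_{\V p}$.
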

To the best of our knowledge, Theorem \ref{Theo:Unser} is new (at least with the present level of generality), the very reason being that its formulation necessitates a proper definition of the corresponding GGP, which had been missing until now. 

We conclude the chapter by emphazing the crucial role of the regularization/whitening operator $\Lop$ and of its native space $\Spc H_\Lop$. As for the deterministic part of the story, the generalized forms of $\Lop$-splines are the solutions of regularized linear inverse problems in $\Spc H_\Lop$. Their stochastic counterparts are precisely the GGPs whitened by $\Lop$. (The delicate part in this statement is the necessity to invert $\Lop$, which requires the specification of boundary conditions.) The pleasing outcome is the mathematical congruence between the deterministic and stochastic worlds. The present framework provides a rigorous support to statements such as ``fractional splines are optimal estimators for fractional Brownian motion.'' Likewise, we can now identify the polynomial splines of degree $2n-1$ as the optimal estimators of Gaussian processes, which, up to some boundary conditions (the technical part of the story), are the $n$-fold integration of a white noise, or, for more classic folks,  the $(n-1)$-fold integration of a Brownian motion.
With the help of abstraction and nuclear spaces, we ultimately end up with an equivalence in the continuum that is as strong as in finite dimensions.
\appendix
\section{Appendix: The Finite-Dimensional Setting}
Many estimation/prediction problems in statistics and signal processing as well as inverse problems in science and engineering can be reduced to the recovery of an unknown signal, represented by a vector $\M x \in \R^N$, from a set of ``noisy'' linear measurements $\M y \in \R^M$. This setting is generically described by the measurement model 
\begin{align}
\label{Eq:linmodel}
\M y= \M H \M x + \V \epsilon,
\end{align}
where $\M H=[\M h_1 \cdots \M h_M]^\Tr \in \R^{M \times N}$ is a known ``system matrix'' that models the physics of the acquisition/measurement process and $\V \epsilon$ is some additive noise or disturbance term. We now briefly recall the classic variational and statistical formulations of such problems and show that the corresponding signal reconstructions can be deduced as special cases of Theorems \ref{Theo:GeneralRepSemiHilbert} and \ref{Theo:GenGaussMarkov} with $\Spc S=\Spc S(\mathbb{I})=\R^N$ where $\mathbb{I}=\{1,\dots,N\}$. 

\subsection{Variational or Tikhonov Solution}
When the inversion of \eqref{Eq:linmodel} is ill-posed, the standard approach to estimate $\M x$ is to perform a regularized-least-square fit of the data. 
In the Tikhonov framework, this is formulated as
\begin{align}
\label{Eq:Tikformulation}
\M x_{\lambda}&=\arg \min_{\M x \in \R^N} \left( \|\M y - \M H \M x\|_2^2 + \lambda \|\M L \M x\|_2^2\right),
\end{align}
where $\M L: \R^N \to \R^N$ is an appropriate regularization operator and $\lambda\in \R_{\ge0}$
an adjustable parameter that controls the strength of the regularization. By setting the gradient of the Tikhonov loss to zero, one gets the classic linear solution
\begin{align}
\M x_{\lambda}
&=
(\M H^\Tr \M H + \lambda \M L^\Tr\M L)^{-1} \M H^\Tr \M y.
\label{Eq:Tikhonov}
\end{align}
The key now is that, when $\M L$ is invertible (or, equivalently, when the symmetric matrix $\M A=(\M L^\Tr \M L)^{-1}$ is positive-definite), we can rewrite \eqref{Eq:Tikhonov} as
\begin{align}
\M x_{\lambda}
&=
\M H^\Tr\M A ( \M G+ \lambda \M I)^{-1} \M y \quad  \mbox{ with }\quad 
\M G= \M H \M A\M H^\Tr.
\label{Eq:Tikhonov2}
\end{align}
This can be readily verified if one forms the difference of the reconstruction matrices in \eqref{Eq:Tikhonov} and \eqref{Eq:Tikhonov2} and gets rid of the inverses by suitable right and left multiplication. 

By comparing \eqref{Eq:Tikhonov2} to \eqref{Eq:TikAbstract}, we identify \eqref{Eq:Tikhonov} as a particular case of
Theorem \ref{Theo:GeneralRepSemiHilbert} with $N_0=0$,  $\Spc H=(\R^N,\|\cdot\|_2)$,  $f=\M x$, $\nu_m=\M h_m$, and $\psi_m= \M A \M h_m$.
This is the simplest functional setting of our theory. There, all the underlying vector spaces are topological equivalent: $\Spc S(\mathbb{I})\simeq\Spc S'(\mathbb{I})\simeq
L_2=\ell_2(\mathbb{I})=(\R^N,\|\cdot\|_2)$. The required completeness of $\Spc H$ and the continuity of $\Op L: \Spc H \to L_2$  are automatically satisfied because: (i)  all finite-dimensional Hilbert spaces are isomorphic to $\ell_2(\mathbb{I})=\R^N$ (for short);
and (ii) any linear operator $\ell_2(\mathbb{I})\to\ell_2(\mathbb{I})$ is de facto continuous and can be implemented as a matrix multiplication.

\subsection{MMSE Solution under the Gaussian Hypothesis}
We now consider a measurement model that is the stochastic counterpart of \eqref{Eq:linmodel}. There,
the signal $\M x$ and the noise $\V \epsilon$ are realizations of Gaussian random
vectors $\VRand X \sim \Spc N(\V \mu_{\VRand x},\M C_{\VRand x})$ and $\VRand N \sim \Spc N(\M 0,\sigma^2 \M I)$, respectively. The measurement noise is zero-mean i.i.d.\ with variance $\sigma^2$ and independent of the signal. As for the signal $\M x$, one can obtain
its prior pdf by taking the $N$-dimensional inverse Fourier transform of the Gaussian characteristic function \eqref{Eq:MultiGaussFourier}. This yields
\begin{align}
\label{eq:multiGauss}
p_\VRand x(\M x)=\frac{1}{\sqrt{(2 \pi)^{N} |{\rm det}(\M C_{\VRand x})|}} \exp\left(-\tfrac{1}{2}(\M x-\V \mu_{\VRand x})^\Tr \M C_{\VRand x}^{-1} (\M x-\V \mu_{\VRand X})\right),
\end{align}
which is the generic form of an $N$-dimensional multivariate Gaussian probability density function with mean $\V \mu_{\VRand X}=\Exp\{\VRand X\} \in \R^N$ and covariance matrix 
$\M C_{\VRand x} \in \R^{N \times N}$. The latter is required to be positive definite (and, hence, invertible) for \eqref{eq:multiGauss} to be well-defined.

In the stochastic setting,  the MMSE estimator of $\VRand X$ given $\M y$ is the optimal reconstruction of the signal $\M x$. The foundational result in estimation theory is that, under the above Gaussian hypothesis, the reconstruction is given by
the so-called Wiener estimator
\begin{align}
\label{eq:Wiener}
\M x_{\rm MMSE}=\Exp\{\VRand X|\VRand Y=\M y\}=\V \mu_{\VRand X}+\M C_{\VRand x} \M H^\Tr (\M H \M C_{\VRand x}\M H^\Tr + \sigma^2 \M I)^{-1}\M y.
\end{align}
For $\V \mu_{\VRand X}=\V 0$, this estimator has the same linear form as \eqref{Eq:Tikhonov2}
with $\M C_{\VRand x}=\M A=(\M L^\Tr \M L)^{-1}$ which, in view of the identifications made in Section A.2, confirms its equivalence with the Tikhonov reconstruction \eqref{Eq:Tikhonov}---the discrete version of a spline.
We can derive this Wiener estimator as a particular case of \eqref{Eq:MMSEsol0} in Theorem \ref{Theo:GenGaussMarkov} by taking the ``test functions'' $\{\varphi_n\}$ to be the canonical basis $\M e_1, \dots, \M e_N$ of $\R^N$, which then yields its component-wise description.
\subsection{Innovation Model and Equivalence with MAP Estimator}
\label{Sec:White}
The idea here is to ``standardize'' $\VRand x \sim \Spc N(\V \mu_{\VRand x},\M C_{\VRand x})$ by applying the affine transformation $$\VRand w=\M L (\VRand x-\V \mu_{\VRand x})\sim \Spc N(\M 0,\M I)$$ with a proper choice of the ``whitening'' operator $\M L$ : $\R^N\to\R^N$ such that $\M L \M C_{\VRand x}\M L^\Tr=\M I$. A standard choice 
is the symmetric square-root inverse\footnote{Alternatively, one may also consider the (unique) Cholesky factorization
of the Hermitian symmetric matrix $\M A=\M C_{\M x}^{-1}=\M L \M L^\HTop$, where $\M L$ is lower triangular, which is better suited for recursive Kalman-type implementations of the estimator.} of $\M C_{\VRand x}$, which is given by
$$\M L=\M C_{\VRand x}^{-1/2}=\sum_{n=1}^N \frac{1}{\sqrt{\lambda_n}} \M u_n\M u^\Tr_n$$
where $\lambda_1\ge \cdots\ge \lambda_N>0$ are the eigenvalues of $\M C_{\VRand x}$ with corresponding (orthonormal) eigenvectors $\M u_1,\dots,\M u_N$.  By setting
$\M w=\M L (\M x-\V \mu_{\VRand x})$, this enables us to rewrite the argument of the exponential in \eqref{eq:multiGauss}
 as
 \begin{align}
 \label{eq:innovalikelihood}
-\tfrac{1}{2}(\M x-\V \mu_{\VRand x})^\Tr \M C_{\VRand x}^{-1} (\M x-\V \mu_{\VRand X})=-\tfrac{1}{2}\|\M L (\M x -\V \mu_{\VRand x}) \|_2^2=-\tfrac{1}{2}\|\M w\|_2^2,
\end{align}
which is the innovation form of the log-prior. 

To derive the MAP estimator of 
$\VRand X$ given the noisy linear measurements $\M y$, we invoke Bayes rule $p_{\VRand X|\VRand Y=\M y}(\M x)\propto p_{\VRand Y|\VRand X=\M x}(\M y) p_{\VRand X}(\M x)$
with $p_{\VRand Y|\VRand X=\M x}(\M y)=p_{\VRand N}(\M y - \M H \M x)$. By switching to log-likelihoods and using \eqref{eq:innovalikelihood} to simplify the Gaussian prior, we 
find that this estimator is given by 
\begin{align}
\label{Eq:MAP}\M x_{\rm MAP}&=\arg \max_{\M x \in \R^N} p_{\VRand X|\VRand Y=\M y}(\M x)\nonumber\\
&=\arg \min_{\M x \in \R^N} \left(\frac{1}{\sigma^2} \|\M y - \M H \M x\|_2^2 +  \|\M L (\M x-\V \mu_{\VRand x})\|_2^2\right).
\end{align}
The crucial observation is that \eqref{Eq:MAP} with $\mu_{\VRand x}=\V 0$ and $\sigma^2=\lambda$ is equivalent to \eqref{Eq:Tikformulation}. By determining the solution of \eqref{Eq:MAP}, which is an affine variant of \eqref{Eq:Tikhonov}, we also confirm that $\M x_{\rm MAP}=\M x_{\rm MMSE}$ where $\M x_{\rm MMSE}$ results from the Wiener filter found in \eqref{eq:Wiener}. (The verification procedure is the same as for the alternative form \eqref{Eq:Tikhonov2} of the Tikhonov estimator.)

\subsection*{Acknowledgment}
The research described in this review was supported by the Swiss National Science Foundation (SNSF) under Grant 200020\_219356. Part of this material was presented at the Wiener keynote lecture at ICASSP 2026 in Barcelona, Spain.


\bibliographystyle{ieeetr}


%
%
\bibliography{Kailath.bib}



\end{document}